
\documentclass[12pt]{article}%
\usepackage{graphicx}
\usepackage{color}
\usepackage{amsmath}
\usepackage{a4}
\usepackage{amsfonts}
\usepackage{amssymb}
\usepackage[all]{xy}
\setcounter{MaxMatrixCols}{30}
\providecommand{\U}[1]{\protect \rule{.1in}{.1in}}
\newtheorem{theorem}{Theorem}

\newtheorem{corollary}[theorem]{Corollary}

\newtheorem{definition}[theorem]{Definition}

\newtheorem{lemma}[theorem]{Lemma}
\newtheorem{notation}[theorem]{Notation}

\newtheorem{proposition}[theorem]{Proposition}
\newtheorem{remark}[theorem]{Remark}

\newenvironment{proof}[1][Proof]{\textbf{#1.} }{\  \rule{0.5em}{0.5em}}

\begin{document}

\title{The geometry of Euclidean surfaces with conical singularities}
\author{Charalampos Charitos$\dagger$, Ioannis Papadoperakis$\dagger$
\and and Georgios Tsapogas$\ddagger$\\$\dagger$Agricultural University of Athens \\and $\ddagger$University of the Aegean}
\maketitle

\begin{abstract}
The geometry of closed surfaces equipped with a Euclidean metric with finitely
many conical points of arbitrary angle is studied. The main result is that the
image of a non-closed geodesic has $0$ distance from the set of conical
points. Dynamical properties for the space of geodesics are also proved.
\newline \textit{{2010 Mathematics Subject Classification:} 57M50, 53C22 }

\end{abstract}

\section{Preliminaries}

Let $S$ be a closed surface of genus $\geq2$ equipped with a euclidean metric
with finitely many \textit{conical singularities} (or \textit{conical
points}), say $s_{1},...,s_{n}.$ Every point which is not conical will be
called a \textit{regular point} of $S.$ Denote by $\theta(s_{i})$ the angle at
each $s_{i}.$

In this work we show that if $g$ is a non-closed geodesic in $S,$ then its
image has $0$ distance from the set of conical points $\left \{  s_{1}%
,...,s_{n}\right \}  $ (see Theorems \ref{no_strips} and \ref{dense_small}
below). This is done by analyzing the existence of flat strips in the
universal cover $\widetilde{S}$ of $S.$ In fact we show that a geodesic line
$\widetilde{g}$ in $\widetilde{S},$ whose projection is not a closed geodesic
in $S,$ cannot be contained in a (Euclidean) flat strip. In particular, any
two elements $\xi,\eta$ in $\partial \widetilde{S}$ determine a unique geodesic
line in $\widetilde{S}$ provided that $\xi,\eta$ are not the limit points of a
hyperbolic element $\phi \in \pi_{1}(S).$ We then use Theorem \ref{no_strips} to
show a classical result in this setup, namely, that closed geodesics form a
dense subset in the space $GS$ of geodesics in $S.$ In the final section we
generalize to surfaces with $\theta(s_{i})\in(0,+\infty)\setminus \left \{
2\pi \right \}  $ by employing saddle connections.

We write $C\left(  v,\theta \right)  $ for the standard cone with vertex $v$
and angle $\theta,$ namely, $C\left(  v,\theta \right)  $ is the set $\left \{
\left(  r,t\right)  :0\leq r,t\in \mathbb{R}/\theta \mathbb{Z}\right \}  $
equipped with the metric $ds^{2}=dr^{2}+r^{2}dt^{2}.$

\begin{definition}
A Euclidean surface with conical singularities $s_{1},...,s_{n}$ is a closed
surface $S$ equipped with a metric $d\left(  \cdot,\cdot \right)  $ such that at

\begin{itemize}
\item Every point $p\in S\setminus \left \{  s_{1},...,s_{n}\right \}  $ has a
neighborhood isometric to a disk in the Euclidean plane

\item Each $s_{i}\in \left \{  s_{1},...,s_{n}\right \}  $ has a neighborhood
isometric to a neighborhood of vertex $v$ of the standard cone $C\left(
v,\theta \left(  s_{i}\right)  \right)  .$
\end{itemize}
\end{definition}

From now on and until the final section we assume that the angle $\theta
(s_{i})$ of the conical point $s_{i}$ satisfies $\theta(s_{i})\in(2\pi
,+\infty).$

Clearly, the metric on $S$ is a length metric and the surface $S\ $will be
written $e.s.c.s.$ for brevity. Note that if the genus $g$ is $\geq2,$ such
Euclidean structures exist, see \cite{[Tro]}.

\begin{definition}
A geodesic segment is an isometric map $h:[a,b]\rightarrow S.$ If $x=h(a)$ and
$y=h(b)$ then a geodesic segment joining $x$ and $y$ will be denoted by
$[x,y].$ \newline Let $I=[0,+\infty)$ or $I=(-\infty,+\infty).$ A geodesic
line (resp. geodesic ray) in $S$ is a local isometric map $h:I\rightarrow S$
where $I=\left(  -\infty,+\infty \right)  $ (resp. $I=\left[  0,+\infty \right)
$ ).\newline A closed geodesic is a local isometric map $h:I\rightarrow S$
which is a periodic map.\newline A metric space is called geodesic if every
two points can be joined by a geodesic segment.
\end{definition}

\noindent Observe that a geodesic segment is allowed to contain in its image a
conical singularity.

As every locally compact, complete length space is geodesic (see Th. 1.10 in
\cite{[GLP]}) we immediately have

\begin{proposition}
If $S\ $is a $e.s.c.s.$ then $S$ is a geodesic space.
\end{proposition}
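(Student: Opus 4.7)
The plan is to apply the cited theorem of Gromov--Lafontaine--Pansu (Th.~1.10 in \cite{[GLP]}), which states that any complete, locally compact length space is geodesic. So my task reduces to verifying the three hypotheses: that $\left(S,d\right)$ is a length space, that it is locally compact, and that it is complete.

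For the length space property, I would unpack the definition of an $e.s.c.s.$ The distance $d$ is given as a metric, and it is already remarked in the preceding paragraph that $d$ is a length metric. To justify this carefully, I would note that by definition every regular point $p$ has a neighborhood isometric to a Euclidean disk, which is manifestly a length space, and every conical point $s_i$ has a neighborhood isometric to a neighborhood of the vertex in the standard cone $C(v,\theta(s_i))$, whose metric $ds^2 = dr^2 + r^2 dt^2$ is also a length metric. Since $S$ is covered by such neighborhoods and the length-metric property is local (distances in $S$ can be computed as infima of lengths of curves using finite chains of such charts), $d$ is a length metric.

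For local compactness and completeness, I would use that $S$ is a closed (compact, without boundary) surface by hypothesis. Compactness of $S$ immediately implies both local compactness and completeness: any Cauchy sequence in a compact metric space has a convergent subsequence, which forces the whole sequence to converge. Local compactness is automatic for compact metric spaces.

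Having verified the three hypotheses, the conclusion follows by directly invoking Theorem 1.10 of \cite{[GLP]}: every two points $x,y \in S$ are joined by a geodesic segment $[x,y]$, so $S$ is a geodesic space. I do not anticipate any genuine obstacle here; the statement is essentially a bookkeeping consequence of the definitions combined with the cited general theorem, and no estimates or case analysis involving the cone angles are required at this stage.
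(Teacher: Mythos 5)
Your proof is correct and follows the same route as the paper, which simply observes that $d$ is a length metric and invokes Theorem 1.10 of \cite{[GLP]} for complete, locally compact length spaces; your verification of the hypotheses via compactness of the closed surface is just a more detailed write-up of the same argument.
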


\section{The universal covering and the limit set}

Assume that $S$ is a closed $e.s.c.s.$ of genus $g\geq2$ and denote by $d$ the
euclidean metric on $S.$ Let $\widetilde{S}$ be the universal covering of $S$
and let $p_{S}:\widetilde{S}\rightarrow S$ be the covering projection.
Obviously, the universal covering $\widetilde{S}$ is homeomorphic to
$\mathbb{R}^{2}$ and by requiring $p_{S}$ to be a local isometric map we may
lift $d$ to a metric $\widetilde{d}$ on $\widetilde{S}$ so that $(\widetilde
{S},$ $\widetilde{d})$ becomes an $e.s.c.s.$ There is a discrete group of
isometries $\Gamma$ of $\widetilde{S}$ which is isomorphic to $\pi_{1}(S),$
acting freely on $\widetilde{S}$ so that $S=\widetilde{S}/\Gamma.$

The group $\Gamma$ with the word metric is \textit{hyperbolic in the sense of
Gromov} since $\Gamma$ is isomorphic to $\pi_{1}(S).$ On the other hand,
$\Gamma$ acts co-compactly on $\widetilde{S};$ this implies that
$\widetilde{S}$ is itself a hyperbolic space in the sense of Gromov (see for
example Th. 4.1 in \cite{[CDP]}) which is complete and locally compact. Hence,
$\widetilde{S}$ is a proper space i.e. each closed ball in $\widetilde{S}$ is
compact (see \cite{[GLP]} Th. 1.10). Therefore, the visual boundary
$\partial_{vis}\widetilde{S}$ of $\widetilde{S}$ is defined by means of
geodesic rays and is homeomorphic to $\mathbb{S}^{1}$ (see \cite{[CDP]},
p.19). Furthermore, as all conical singularities in $S$ are assumed to have
angle $>2\pi,$ we deduce that $S$ has curvature $\leq0,$ that is, $S$
satisfies CAT(0) inequality locally (see for example \cite[Theorem
3.15]{[Pau]}). Since $\widetilde{S}$ is simply connected it follows that
$\widetilde{S}$ satisfies CAT(0) globally, i.e., $\widetilde{S}$ is a Hadamard
space (for the definition and properties of CAT(0) spaces see \cite{[Bal]}).
Note that since $\widetilde{S}$ is a CAT(0) space, geodesic lines and geodesic
rays in $\widetilde{S}$ are global isometric maps.

In the next proposition we state the following important property of
$\partial_{vis}\widetilde{S}.$ For a proof see Proposition 2.1 in \cite{[CDP]}.

\begin{proposition}
\label{existence rays, lnes}For every pair of points $x\in$ $\widetilde{S},$
$\xi \in \partial_{vis}\widetilde{S}$ (resp. $\eta,\xi \in \partial_{vis}%
\widetilde{S})$ there is a geodesic ray $r:[0,\infty)\rightarrow \widetilde
{S}\cup \partial_{vis}\widetilde{S}$ (resp. a geodesic line $l:(-\infty
,\infty)\rightarrow \widetilde{S}\cup \partial_{vis}\widetilde{S})$ such that,
$r(0)=x,$ $r(\infty)=\xi$ (resp. $l(-\infty)=\eta,$ $l(\infty)=\xi).$
\end{proposition}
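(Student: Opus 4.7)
The plan is to prove both statements via the classical Arzel\`a--Ascoli extraction of a limiting geodesic from a sequence of geodesic segments whose endpoints are driven out to infinity, leveraging the facts (already established in the excerpt) that $\widetilde{S}$ is proper, complete, CAT(0), and hyperbolic in the sense of Gromov. I would handle the two cases in succession, with the line case built on top of the ray case.

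For the ray, I would fix any geodesic ray $r_{0}$ representing $\xi$, set $x_{n}=r_{0}(n)$, and consider the geodesic segments $c_{n}\colon[0,d(x,x_{n})]\to \widetilde{S}$ from $x$ to $x_{n}$, which exist and are unique by CAT(0). The family $\{c_{n}\}$ is $1$-Lipschitz and shares the starting point $x$, so by properness of $\widetilde{S}$ and a diagonal Arzel\`a--Ascoli argument a subsequence converges, uniformly on compact subintervals of $[0,\infty)$, to a $1$-Lipschitz map $c\colon[0,\infty)\to \widetilde{S}$. Since a uniform limit of CAT(0) geodesic segments is a local isometry, $c$ is a geodesic ray with $c(0)=x$. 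To verify that $c(\infty)=\xi$ I would compare $c$ with $r_{0}$: for each fixed $t$, the points $c(t)$ and $r_{0}(t)$ both lie within bounded distance of the segment $[x,x_{n}]$ for all large $n$, and a Gromov-product computation shows that $c$ and $r_{0}$ are asymptotic, hence define the same visual boundary point.

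For the line (implicitly with $\eta\neq\xi$), I would fix a basepoint $x$, invoke the ray case to produce rays $r_{\eta}$ and $r_{\xi}$ from $x$ to $\eta$ and $\xi$, set $y_{n}=r_{\eta}(n)$, $z_{n}=r_{\xi}(n)$, and consider the CAT(0) segment $\sigma_{n}$ joining $y_{n}$ to $z_{n}$. The new input is Gromov hyperbolicity: $\delta$-thinness of the triangle $x y_{n} z_{n}$ implies that $\sigma_{n}$ meets a fixed ball $\overline{B}(x,D)$ with $D$ depending only on $\delta$. Let $p_{n}\in\sigma_{n}$ realize $d(x,\sigma_{n})$, so that $d(x,p_{n})\leq D$, and reparameterize $\sigma_{n}$ by arc length with $p_{n}$ corresponding to the parameter $0$. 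A diagonal Arzel\`a--Ascoli extraction then produces a subsequence converging uniformly on compact intervals to a geodesic line $c\colon(-\infty,\infty)\to \widetilde{S}$ with $c(0)\in \overline{B}(x,D)$, and the Gromov-product comparison already used in the ray case identifies $c(-\infty)=\eta$ and $c(\infty)=\xi$.

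The main obstacle in this scheme is the uniform bound $d(x,\sigma_{n})\leq D$ needed for the line case; without hyperbolicity it can fail, as one sees in a Euclidean flat where endpoints on two divergent rays force the connecting segment to drift off to infinity. Here the bound is supplied precisely by the thin-triangle inequality applied to the triangles $xy_{n}z_{n}$, together with the boundedness of the Gromov product $(\eta\mid\xi)_{x}$ that follows from $\eta\neq\xi$.
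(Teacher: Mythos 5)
Your argument is correct: the paper gives no proof of its own for this proposition, deferring to Proposition 2.1 of Coornaert--Delzant--Papadopoulos, and the Arzel\`a--Ascoli extraction you describe (properness supplying compactness, convexity/thinness controlling asymptoticity of the limit with $r_0$, and the finiteness of the Gromov product $(\eta\mid\xi)_{x}$ pinning the segments $\sigma_{n}$ near the basepoint) is exactly the standard proof found in that reference. The only imprecision is the claim that the bound $D$ on $d(x,\sigma_{n})$ depends only on $\delta$ --- it also depends on $(\eta\mid\xi)_{x}$ --- but you correct this yourself in your final paragraph.
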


Since $\widetilde{S}$ is a hyperbolic space in the sense of Gromov, the
isometries of $\widetilde{S}$ are classified as elliptic, parabolic and
hyperbolic \cite{[Gr]}. On the other hand, $\Gamma$ is a hyperbolic group,
thus $\Gamma$ does not contain parabolic elements with respect to its action
on its Cayley graph (see Th. 3.4 in \cite{[CDP]}). From this, it follows that
all elements of $\Gamma$ are hyperbolic isometries of $\widetilde{S}.$
Therefore, for each $\varphi \in \Gamma$ and each $x\in \widetilde{S}$ the
sequence $\varphi^{n}(x)$ (resp. $\varphi^{-n}(x))$ has a limit point
$\varphi(+\infty)$ (resp. $\varphi(-\infty))$ when $n\rightarrow+\infty$ and
$\varphi(+\infty)\neq \varphi(-\infty).$ The point $\varphi(+\infty)$ is called
\textit{attractive} and the point $\varphi(-\infty)$ \textit{repulsive} point
of $\varphi.$

The \textit{limit set} $\Lambda(\Gamma)$ of $\Gamma$ is defined to be
$\Lambda(\Gamma)=\overline{\Gamma x}\cap \partial_{vis}\widetilde{S},$ where
$x$ is an arbitrary point in $\widetilde{S}.$ Since the action of $\Gamma$ on
$\widetilde{S}$ is co-compact, it is a well known fact that $\Lambda
(\Gamma)=\partial_{vis}\widetilde{S},$ and hence $\Lambda(\Gamma
)=\mathbb{S}^{1}.$ Note that the action of $\Gamma$ on $\widetilde{S}$ can be
extended to $\partial_{vis}\widetilde{S}$ and that the action of $\Gamma$ on
$\partial_{vis}\widetilde{S}$ $\times \partial_{vis}\widetilde{S}$ is given by
the product action.

Denote by $F_{h}$ the set of points in $\partial_{vis}\widetilde{S}$ which are
fixed by hyperbolic elements of $\Gamma.$ Since $\Lambda(\Gamma)=\partial
_{vis}\widetilde{S},$ the following three results can be derived from
\cite{[Coo]}.

\begin{proposition}
\label{dense1} The set $F_{h}$ is $\Gamma-$invariant and dense in
$\partial_{vis}\widetilde{S}.$
\end{proposition}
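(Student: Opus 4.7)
The proof naturally splits into two parts: the $\Gamma$-invariance of $F_h$, which is a formal consequence of conjugation, and the density in $\partial_{vis}\widetilde{S}$, which is the substantive step.

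For invariance, suppose $\xi\in F_h$ is fixed by a hyperbolic element $\varphi\in\Gamma$ and let $\gamma\in\Gamma$. The conjugate $\gamma\varphi\gamma^{-1}\in\Gamma$ is again hyperbolic (the dynamical type of an isometry of a Gromov hyperbolic space is a conjugation invariant, since it is detected by the translation length) and fixes $\gamma\xi$, hence $\gamma\xi\in F_h$.

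For density, fix any hyperbolic $\varphi\in\Gamma$ (every non-trivial element qualifies, as noted in the text) and let $\xi_{+}=\varphi(+\infty)$. For every $\gamma\in\Gamma$ the point $\gamma\xi_{+}=(\gamma\varphi\gamma^{-1})(+\infty)$ belongs to $F_h$, so it is enough to show that the orbit $\Gamma\cdot\xi_{+}$ is dense in $\partial_{vis}\widetilde{S}$. Given $\eta\in\partial_{vis}\widetilde{S}$ and a neighborhood $U$ of $\eta$, fix a basepoint $x\in\widetilde{S}$; since $\Lambda(\Gamma)=\partial_{vis}\widetilde{S}$, there is a sequence $(\gamma_n)\subset\Gamma$ with $\gamma_n x\to\eta$.

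The key step is to promote this interior convergence to a convergence statement on the boundary. By a standard \emph{north--south} result in the theory of proper Gromov hyperbolic spaces (see \cite{[Coo]}), after passing to a subsequence there is a single point $\eta^{-}\in\partial_{vis}\widetilde{S}$ such that $\gamma_n\zeta\to\eta$ for every $\zeta\in\partial_{vis}\widetilde{S}\setminus\{\eta^{-}\}$; the exceptional point arises as a subsequential limit of $\gamma_n^{-1}x$, and the convergence on the boundary is obtained by estimating Gromov products $(\gamma_n\zeta\,|\,\eta)_x$ via $\delta$-hyperbolic triangle inequalities. Replacing $\varphi$ by $\varphi^{-1}$ if $\xi_{+}$ happens to equal $\eta^{-}$ (so that the other fixed point $\varphi(-\infty)$ becomes available), this yields $\gamma_n\xi_{+}\in U$ for all sufficiently large $n$. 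The main obstacle is precisely this boundary dynamics step; it applies directly in our setting because, as established earlier in the section, $\widetilde{S}$ is a proper, complete, locally compact Gromov hyperbolic space (in fact CAT(0)), so no reproof is needed.
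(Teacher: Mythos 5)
Your argument is correct: the conjugation argument for $\Gamma$-invariance and the density argument via $\Lambda(\Gamma)=\partial_{vis}\widetilde{S}$ together with the convergence property of sequences of isometries of a proper Gromov hyperbolic space (pushing all of $\partial_{vis}\widetilde{S}$ except one point $\eta^{-}$ toward the limit of $\gamma_n x$, and using the spare fixed point $\varphi(-\infty)$ when $\varphi(+\infty)=\eta^{-}$) are exactly the standard route. The paper itself gives no proof, merely citing \cite{[Coo]}, and your write-up is essentially the argument that citation stands for, so there is nothing to add beyond noting that the boundary-dynamics lemma you invoke is indeed available in this setting for the reasons you state.
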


\begin{proposition}
\label{dense2} There exists an orbit of $\Gamma$ dense in $\partial
_{vis}\widetilde{S}\times \partial_{vis}\widetilde{S}.$
\end{proposition}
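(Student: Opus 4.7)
My plan is a Baire-category argument that reduces the existence of a dense orbit to topological transitivity of the diagonal $\Gamma$-action on $X := \partial_{vis}\widetilde{S} \times \partial_{vis}\widetilde{S}$, with transitivity itself obtained from Proposition \ref{dense1} combined with the north--south dynamics of hyperbolic isometries.

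First, since $\partial_{vis}\widetilde{S}\cong \mathbb{S}^{1}$ is compact and metrizable, so is $X$; hence $X$ is a Baire space admitting a countable basis $\{W_{n}\}_{n\in\mathbb{N}}$ of non-empty open sets. For each $n$ the set $O_{n} := \bigcup_{\gamma\in\Gamma}\gamma^{-1}(W_{n})$ is open, and any $(\xi,\eta) \in \bigcap_{n} O_{n}$ has $\Gamma$-orbit meeting every basic open set, i.e.\ a dense orbit in $X$. By the Baire category theorem it therefore suffices to show that each $O_{n}$ is dense in $X$, which is equivalent to topological transitivity of the $\Gamma$-action: for any non-empty open $W,W' \subset X$, there exists $\gamma \in \Gamma$ with $\gamma(W') \cap W \neq \emptyset$.

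Towards transitivity I would first upgrade Proposition \ref{dense1} to the density in $X$ of the set $D := \{(\phi(+\infty),\phi(-\infty)) : \phi \in \Gamma \text{ hyperbolic}\}$ of fixed-point pairs. This uses the equivariance $(\gamma\phi\gamma^{-1})(\pm\infty) = \gamma(\phi(\pm\infty))$, the density of $F_{h}$ in $\partial_{vis}\widetilde{S}$, and a ping-pong construction: given two hyperbolic elements $\phi,\psi$ with disjoint fixed-point sets, for large $n$ the element $\phi^{n}\psi^{-n}$ is hyperbolic with fixed-point pair arbitrarily close to $(\phi(+\infty),\psi(+\infty))$; varying $\phi$ and $\psi$ and applying Proposition \ref{dense1} one then approximates every point of $X$.

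The main obstacle is the transitivity step itself. A single hyperbolic element has only one attractive and one repulsive fixed point, so for disjoint target opens $U,V \subset \partial_{vis}\widetilde{S}$ it cannot on its own transport an off-diagonal pair into $U \times V$ by iteration alone: under $\phi^{n}$ both coordinates collapse to the attractive fixed point. Following the convergence-group strategy of \cite{[Coo]}, the resolution is another ping-pong scheme combining two independent hyperbolic elements whose fixed-point pairs are placed, via the density of $D$, in prescribed neighborhoods of the targets; a suitable product of high powers of these hyperbolics then produces a $\gamma \in \Gamma$ whose diagonal action decouples the images of the two coordinates and sends a chosen pair in $W'$ into $W$. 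This yields topological transitivity and, together with the Baire reduction above, concludes the proof.
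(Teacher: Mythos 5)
The paper does not actually prove this proposition: it merely records that Propositions \ref{dense1}--\ref{dense3} ``can be derived from'' \cite{[Coo]}, so there is no internal argument to compare yours against. Your architecture (Baire category reduction to topological transitivity of the diagonal action, plus density of fixed-point pairs obtained by ping-pong) is the standard route, and your first two steps are sound; note that the density of $D=\{(\phi(+\infty),\phi(-\infty))\}$ that you propose to establish is precisely Proposition \ref{dense3}, which the paper states separately, so you could simply invoke it rather than re-derive it.

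The gap is in the transitivity step, which after the Baire reduction is the entire content of the proposition, and which you leave as a gesture (``a suitable product of high powers \dots decouples the images of the two coordinates''). Moreover your diagnosis of the obstacle is wrong: a single hyperbolic element \emph{does} suffice. Under $\phi^{n}$ only points bounded away from the repelling fixed point collapse to $\phi(+\infty)$; an open set containing $\phi(-\infty)$ is expanded, its image under $\phi^{n}$ eventually covering the complement of any prescribed neighbourhood of $\phi(+\infty)$. Concretely, given nonempty open sets $V_{1},V_{2},U_{1},U_{2}\subset\partial_{vis}\widetilde{S}$, use Proposition \ref{dense3} to choose a hyperbolic $g\in\Gamma$ with $g(-\infty)\in V_{1}$ and $g(+\infty)\in U_{2}$, and pick $p_{1}\in U_{1}\setminus\{g(+\infty)\}$, $p_{2}\in V_{2}\setminus\{g(-\infty)\}$. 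By north--south dynamics, for $n$ large one has $g^{-n}(p_{1})\in V_{1}$ and $g^{n}(p_{2})\in U_{2}$, so the pair $\left(g^{-n}(p_{1}),p_{2}\right)\in V_{1}\times V_{2}$ is carried by $g^{n}$ to $\left(p_{1},g^{n}(p_{2})\right)\in U_{1}\times U_{2}$. This closes the argument with one element, whereas your proposed two-element ``decoupling'' is both unnecessary and unsubstantiated as written: a naive product $a^{n}b^{m}$ of high powers of two independent hyperbolics still sends both coordinates of a generic pair toward $a(+\infty)$, so the construction you sketch would need exactly the kind of quantitative control you have not supplied.
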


\begin{proposition}
\label{dense3}The set $\{ \left(  \phi(+\infty),\phi(-\infty)\right)  :\phi
\in \Gamma$ is hyperbolic\} is dense in $\partial_{vis}\widetilde{S}%
\times \partial_{vis}\widetilde{S}.$
\end{proposition}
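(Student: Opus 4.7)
The plan is to reduce the statement to the following ping-pong claim: for any non-empty open sets $U, V \subset \partial_{vis}\widetilde{S}$ there exists a hyperbolic $\phi \in \Gamma$ with $\phi(+\infty) \in U$ and $\phi(-\infty) \in V$. Density in the product topology on $\partial_{vis}\widetilde{S} \times \partial_{vis}\widetilde{S}$ then follows at once.

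First I would use Proposition \ref{dense1} to pick hyperbolic elements $\alpha, \beta \in \Gamma$ with $\alpha(+\infty) \in U$ and $\beta(-\infty) \in V$. Since $F_h$ is dense, by slightly perturbing $\beta$ (replacing it with the hyperbolic element whose repelling point is a nearby element of $F_h \cap V$) we may arrange that the four fixed points $\alpha(\pm\infty), \beta(\pm\infty)$ are pairwise distinct, allowing us to choose pairwise disjoint open neighborhoods $A^\pm$ of $\alpha(\pm \infty)$ and $B^\pm$ of $\beta(\pm \infty)$ with $A^+ \subset U$ and $B^- \subset V$. The diagonal case $\xi = \eta$ is handled identically, since $U$ and $V$ are allowed to overlap and only the four fixed points of $\alpha$ and $\beta$ need to be kept distinct.

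The key dynamical input is the North--South action of each hyperbolic $\gamma \in \Gamma$ on the boundary: for any neighborhoods $N^\pm$ of $\gamma(\pm\infty)$, there exists $n_0$ such that
\[
\gamma^n\bigl(\partial_{vis}\widetilde{S} \setminus N^-\bigr) \subset N^+ \quad \text{for all } n \geq n_0.
\]
This is a standard property of hyperbolic isometries of proper Gromov-hyperbolic spaces, and in our CAT(0) setting it can be verified directly via Busemann functions along the axis of $\gamma$. Applying it to $\alpha$ and $\beta$, for $n$ large enough we have $\alpha^n(\partial_{vis}\widetilde{S} \setminus A^-) \subset A^+$ and $\beta^n(\partial_{vis}\widetilde{S} \setminus B^-) \subset B^+$.

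Set $\phi_n := \alpha^n \beta^n$. Since $B^+$ is disjoint from $A^-$, applying $\beta^n$ and then $\alpha^n$ gives $\phi_n(\partial_{vis}\widetilde{S} \setminus B^-) \subset A^+$; dually, $\phi_n^{-1}(\partial_{vis}\widetilde{S} \setminus A^+) \subset B^-$. Since $\overline{A^+}$ and $\overline{B^-}$ are disjoint arcs in $\partial_{vis}\widetilde{S} \simeq \mathbb{S}^{1}$, the Brouwer fixed point theorem yields a $\phi_n$-fixed point in $\overline{A^+} \subset U$ and a $\phi_n^{-1}$-fixed point in $\overline{B^-} \subset V$, which are necessarily distinct. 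Hence $\phi_n$ is non-trivial and, by the classification recalled earlier, hyperbolic; iterating the inclusions shows the fixed point in $\overline{A^+}$ is attracting and the one in $\overline{B^-}$ is repelling, so $(\phi_n(+\infty), \phi_n(-\infty)) \in U \times V$ as required. The principal technical point is the verification of the North--South dynamics; once that is in hand, the ping-pong combination is essentially automatic.
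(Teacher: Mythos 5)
Your ping-pong argument is correct in outline, but it is worth noting that the paper itself offers no proof of this proposition: it simply records that Propositions \ref{dense1}--\ref{dense3} ``can be derived from'' Coornaert's thesis, using that $\Lambda(\Gamma)=\partial_{vis}\widetilde{S}$ because the action is cocompact. So your contribution is a self-contained argument where the paper delegates to the literature, and the route you take (North--South dynamics plus a ping-pong combination $\phi_n=\alpha^n\beta^n$) is exactly the standard way such density statements are proved for convergence group actions; it buys transparency at the cost of having to verify the dynamical input yourself. Two steps deserve more care than you give them. First, the claim that a perturbation of $\beta$ makes the four fixed points pairwise distinct is not automatic from density of $F_h$ alone: you should invoke the standard fact that two hyperbolic elements of a discrete group either have the same fixed-point pair or disjoint ones, so that choosing $\beta(-\infty)\in F_h\cap V\setminus\{\alpha(+\infty),\alpha(-\infty)\}$ (possible since $F_h\cap V$ is infinite and only finitely many fixed-point pairs meet $\{\alpha(\pm\infty)\}$) forces all four points to be distinct. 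Second, to conclude that $\phi_n$ is hyperbolic you need that every nontrivial element of $\Gamma$ is hyperbolic; the paper establishes this (no parabolics in a hyperbolic group, and $\Gamma\cong\pi_1(S)$ is torsion-free), and $\phi_n\neq\mathrm{id}$ because it maps $\partial_{vis}\widetilde{S}\setminus B^-$ into the proper subset $A^+$. Also, invoking Brouwer for a closed arc is overkill --- a continuous self-map of a closed interval has a fixed point by the intermediate value theorem --- and you should choose $A^+$ and $B^-$ with disjoint closures contained in $U$ and $V$ respectively, since the fixed points you produce a priori lie only in the closures. With these points made explicit the proof is complete and, in particular, handles the diagonal correctly since $U$ and $V$ may overlap.
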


\begin{lemma}
\label{existence closed geodesic}Let $\varphi$ be an element of $\Gamma$ and
let $\eta=\varphi(-\infty)$ and $\xi=\varphi(\infty)$ be the repulsive and
attractive points of $\varphi$ in $\partial \widetilde{S}.$ Then any geodesic
line $c$ joining $\eta$ and $\xi$ projects to a closed geodesic in $S.$
\end{lemma}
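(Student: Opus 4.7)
The plan is to exhibit a nontrivial element of $\Gamma$ that preserves $c$ setwise and translates along it; any such element forces $p_{S}\circ c$ to be periodic, and since $p_{S}$ is a local isometry and $c$ is a geodesic line, $p_{S}\circ c$ is then automatically a closed geodesic in $S$.

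I would begin by using the standard fact that the hyperbolic isometry $\varphi$ of the proper Hadamard space $\widetilde{S}$ admits an axis $\alpha\subset \widetilde{S}$ on which it acts by a translation of some length $\tau>0$, with the two endpoints of $\alpha$ at infinity being exactly $\eta$ and $\xi$. For the given $c$, the geodesic lines $c$ and $\alpha$ are asymptotic in both directions, so by the CAT$(0)$ flat strip theorem they bound a (possibly degenerate) flat strip. Applying the same reasoning to every pair of geodesic lines joining $\eta$ to $\xi$, the parallel set $P$ --- the union of all geodesic lines from $\eta$ to $\xi$ --- is a closed convex subset of $\widetilde{S}$; because $\widetilde{S}$ is $2$-dimensional, $P$ splits isometrically as $[0,W]\times \mathbb{R}$ with each fiber $\{u\}\times \mathbb{R}$ a geodesic from $\eta$ to $\xi$.

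The width $W$ has to be finite, for otherwise $\widetilde{S}$ would contain a flat half-plane whose stabilizer in $\Gamma$ could not act co-compactly without producing a $\mathbb{Z}^{2}$ subgroup, contradicting the Gromov hyperbolicity of $\Gamma\cong \pi_{1}(S)$; more concretely, the quotient of the strip by its $\Gamma$-stabilizer (which contains $\varphi$) has area at least $W\tau/2$ and must fit inside the finite area of $S$, yielding $W\leq 2\,\mathrm{area}(S)/\tau$. Since $\varphi$ fixes $\eta$ and $\xi$ it preserves $P$, and on $[0,W]\times \mathbb{R}$ an isometry fixing both ends of the $\mathbb{R}$-factor has the form $(u,v)\mapsto (\sigma(u),v+\tau)$ with $\sigma$ either the identity or the reflection of $[0,W]$ about $W/2$. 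In either case $\varphi^{2}$ acts as $(u,v)\mapsto (u,v+2\tau)$, so $\varphi^{2}(c)=c$ and $\varphi^{2}$ translates $c$ by $2\tau$; hence $p_{S}(c(t+2\tau))=p_{S}(\varphi^{2}(c(t)))=p_{S}(c(t))$ for every $t$, and $p_{S}\circ c$ is the desired closed geodesic. The step most likely to need care is the finite-width product decomposition of $P$, since this is where the specific geometry of the compact quotient $S$ --- ruling out co-compactly stabilized flat regions of dimension $\geq 2$ --- must be brought in; once that is granted, the classification of isometries of a flat strip yields the required period almost immediately.
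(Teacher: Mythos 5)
Your argument is correct and follows essentially the same route as the paper: existence of an axis for the hyperbolic isometry $\varphi$, the CAT(0) flat strip theorem to compare $c$ with that axis, and the conclusion that $\varphi$ (or a power of it) translates $c$ along itself. The paper simply outsources the last step to Ballmann's Proposition 3.3 (every geodesic joining $\varphi(\pm\infty)$ is an axis), whereas you rebuild it from the product structure of the parallel set and the classification of isometries of a flat strip; your extra care with $\varphi^{2}$ to absorb a possible glide reflection is harmless, though unnecessary here since deck transformations of the orientable surface $\widetilde{S}$ preserve orientation. One caveat: your ``more concrete'' area bound $W\leq 2\,\mathrm{area}(S)/\tau$ is not justified as stated, because the quotient of the strip by its stabilizer only maps to $S$ by a local isometry whose image may overlap itself, so its area need not be bounded by $\mathrm{area}(S)$; fortunately the finiteness of $W$ already follows from the Gromov hyperbolicity of $\widetilde{S}$, which the paper establishes and which directly forbids flat half-planes.
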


\begin{proof}
$S$ is a $CAT(0)$ space and $\varphi$ is an axial isometry (following
Definition 3.1 of \cite{[Bal]}). Therefore, from Proposition 3.3, p. 31 of
\cite{[Bal]}, there is an axis $c_{0}$ of $\varphi$ in $\widetilde{S}$ which
projects to a closed geodesic in $S.$

Let $c$ be a geodesic line of $\widetilde{S}$ joining the points $\eta,$
$\xi.$ Then $c$ and $c_{0}$ are parallel in $\widetilde{S}$ i.e. they bound a
flat strip in $\widetilde{S},$ (see Pr. 5.8, p. 25 in \cite{[Bal]}).
Therefore, by Proposition 3.3 of \cite{[Bal]}, $c$ is also an axis of
$\varphi$ and thus it projects to a closed geodesic in $S.$
\end{proof}

\begin{figure}[ptb]
\begin{center}
\includegraphics[scale=2.3]
{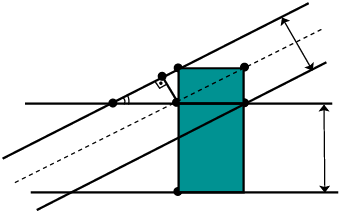}
\end{center}
\par
\begin{picture}(22,12)
\put(190,186){$E$}
\put(207,194){$A$}
\put(280,194){$D$}
\put(162,151){$\theta$}
\put(140,158){$F$}
\put(205,135){$B$}
\put(289,135){$C$}
\put(205,36){$G$}
\put(378,90){$\delta$}
\put(355,200){$\varepsilon$}
\end{picture}
\caption{The quadrangle in the intersection of two flat strips}%
\label{quardangle}%
\end{figure}

\section{Flat strips and closed geodesics}

Let $GS$ be the space of all local isometric maps $\gamma:\mathbb{R}%
\rightarrow S.$ The image of such a $\gamma$ will be referred to as a geodesic
in $S.$ We recall here that geodesics are allowed to contain conical
singularities. The geodesic flow is defined by the map

\begin{center}
$\Phi:\mathbb{R}\times GS\rightarrow GS$
\end{center}

\noindent where the action of $\mathbb{R}$ is given by right translation, i.e.
for each $t\in \mathbb{R}$ and $\gamma \in GS,$ $\Phi(t,\gamma)=t\cdot \gamma,$
where $t\cdot \gamma:\mathbb{R}\rightarrow S$ is the geodesic defined by
$t\cdot \gamma(s)=\gamma(t+s),$ $s\in \mathbb{R}.$

Consider also the space $G\widetilde{S}$ of all isometric maps $g:\mathbb{R}%
\rightarrow \widetilde{S}.$ Both spaces $GS$ and $G\widetilde{S}$ are equipped
with the compact-open topology. Moreover the space $G\widetilde{S}$ with the
compact-open topology is metrizable and its metric is given by the formula
(see 8.3.B in \cite{[Gr]})%

\[
\left \vert g_{1}-g_{2}\right \vert _{G\widetilde{S}}=\int_{-\infty}^{+\infty
}\left \vert g_{1}(t)-g_{2}(t)\right \vert _{\widetilde{S}}e^{-|t|}dt.
\]

In the proof of Proposition \ref{u_ubar} below, we will need the fact that
$GS$ with the compact-open topology is sequentially compact. A proof is
included here for the reader's convenience.

\begin{proposition}
\label{epitelous} The space of geodesics $GS$ equipped with the compact-open
topology is sequentially compact.
\end{proposition}

\begin{proof}
Let $\left \{  g_{n}\right \}  _{n\in \mathbb{N}}$ be a sequence in $GS.$ As $S$
is compact we may assume, by passing to a subsequence if necessary, that
$\left \{  g_{n}\left(  0\right)  \right \}  _{n\in \mathbb{N}}$ converges to a
point $x_{0}\in S.$ Pick any $\widetilde{x}_{0}$ with $p\left(  \widetilde
{x}_{0}\right)  =x_{0}$ and find $r>0$ so that the open disk $D\left(
\widetilde{x}_{0},r\right)  $ projects via the covering map $p$ isometrically
onto $D\left(  x_{0},r\right)  .$ For each $n\in \mathbb{N},$ the geodesic
$g_{n}$ lifts uniquely to a geodesic $\widetilde{g}_{n}\in G\widetilde{S}$
with $\widetilde{g}_{n}\left(  0\right)  \in D\left(  \widetilde{x}%
_{0},r\right)  .$ By passing to a subsequence, we may assume that
\begin{equation}
\widetilde{g}_{N}\left(  0\right)  \in D\left(  \widetilde{x}_{0},\frac{1}%
{n}\right)  \mathrm{\  \ for\  \ all\  \ }N\geq n. \label{enani}%
\end{equation}
Fix a sequence $\left \{  t_{n}\right \}  $ converging to $\infty.$ For each
$n\in \mathbb{N},$ denote by $\sigma_{n}$ the (unique) geodesic segment
$\left[  \widetilde{x}_{0},\widetilde{g}_{n}\left(  t_{n}\right)  \right]  $
in $\widetilde{S}.$ Clearly, $d\left(  \widetilde{x}_{0},\widetilde{g}%
_{n}\left(  t_{n}\right)  \right)  \rightarrow \infty.$ By passing to a subsequence if 
necessary,  the sequence of geodesic segments $\left \{  \sigma
_{n}\right \}  $ converges uniformly on compact sets to a geodesic ray
$r_{\widetilde{x}_{0}}^{+}$ with initial point $\widetilde{x}_{0}$ (see \cite[Chapter II, Prop. 2.5]{[Bal]}).

Denote by $\tau_{n}$ the geodesic sub-segment of $\operatorname{Im}%
\widetilde{g}_{n}$ with initial point $\widetilde{g}_{n}\left(  0\right)  $
and length equal to the length of $\sigma_{n}.$ By a triangle inequality
argument and using property (\ref{enani}) above, the endpoints of $\sigma
_{N},\tau_{N}$ are of distance $\leq \frac{2}{n}$ for all $N\geq n.$

Since $\widetilde{S}$ is a CAT(0) space, the distance function $t\rightarrow
d\left(  \sigma_{N}\left(  t\right)  ,\tau_{N}\left(  t\right)  \right)  $ is
convex in $t$ (see \cite[Chapter I, Prop. 5.2]{[Bal]}). It follows that
\[
d\left(  \sigma_{N}\left(  t\right)  ,\tau_{N}\left(  t\right)  \right)
\leq \frac{2}{n}\mathrm{\  \ for\  \ all\  \ }N\geq n\mathrm{\mathrm{\  \ and\ for}%
\ all\  \ }t\in \left[  0,t_{n}\right]  .
\]
Combining the latter with the fact that $\sigma_{n}\rightarrow r_{\widetilde
{x}_{0}}^{+}$ we conclude that $\widetilde{g}_{n}|_{\left[  0,\infty \right)
}$ converges uniformly on compact sets to the geodesic ray $r_{\widetilde
{x}_{0}}^{+}.$

Working similarly we obtain that $\widetilde{g}_{n}|_{\left(  -\infty
,0\right]  }$ converges uniformly on compact sets to a geodesic ray
$r_{\widetilde{x}_{0}}^{-}.$ It remains to show that $r_{\widetilde{x}_{0}%
}^{-}\cup r_{\widetilde{x}_{0}}^{+}$ is a geodesic line or, equivalently, it
is a local geodesic at $\widetilde{x}_{0}.$ For, if for some $t^{+}>0,$
$t^{-}<0$ the geodesic segment $\left[  r_{\widetilde{x}_{0}}^{-}\left(
t^{-}\right)  ,r_{\widetilde{x}_{0}}^{+}\left(  t^{+}\right)  \right]  $ does
not contain $\widetilde{x}_{0}$ then for sufficiently large $N,$ the points
$\widetilde{g}_{N}\left(  t^{-}\right)  $ and $\widetilde{g}_{N}\left(
t^{-}\right)  $ would be joined by a path of length smaller that
$t^{+}+\left \vert t^{-}\right \vert $ violating the fact that $g_{N}$ is a
geodesic. This concludes the proof of the proposition.
\end{proof}

\begin{definition}
A \textit{ flat strip} in $\widetilde{S}$ is a subset of $\widetilde{S}$
isometric, with respect to the induced metric, to a strip $[0,\varepsilon
]\times \mathbb{R}$ in $\mathbb{R}^{2}$, for an appropriate $\varepsilon>0$
such that for each $r\in \left[  0,\varepsilon \right]  ,$ $\{r\} \times
\mathbb{R}$ is the image of a geodesic line. If in addition, for
$r=0,\varepsilon$ the distance $d\left(  \operatorname{Im}\left(  \{r\}
\times \mathbb{R}\right)  ,\left \{  s_{1},...,s_{n}\right \}  \right)  $ of the
images of the geodesic lines $\{r\} \times \mathbb{R}$ from the set of conical
points is $0,$ the flat strip will be called maximal flat strip. Observe that
a maximal flat strip may contain conical points only in its boundary lines
corresponding to $\{0\} \times \mathbb{R}$ and $\{ \varepsilon \} \times
\mathbb{R}$.\newline By substituting $\mathbb{R}$ with $\left[  0,\infty
\right)  $ and geodesic lines by geodesic rays, the notion of flat half strip
is defined.
\end{definition}

\begin{notation}
The number $\varepsilon>0$ will be called the width of the flat strip (resp.
flat half strip for geodesic rays). A flat strip of width $\varepsilon$ will
be denoted by $FS\left(  \varepsilon \right)  $ (resp. $\left.  FHS\left(
\varepsilon \right)  \right)  .$\newline For $\widetilde{g}$ in $G\widetilde
{S}$ we will be writing $\widetilde{g}\in FS\left(  \varepsilon \right)  $ to
indicate that $\widetilde{g}$ is identified with a line $\{r\} \times
\mathbb{R}$ in $[0,\varepsilon]\times \mathbb{R\equiv}FS\left(  \varepsilon
\right)  .$ \newline For $g$ in $GS$ we will be writing $g\in FS\left(
\varepsilon \right)  $ to indicate that there exists a flat strip $FS\left(
\varepsilon \right)  \subset \widetilde{S}$ such that for some lift
$\widetilde{g}$ of $g$, $\widetilde{g}\in FS\left(  \varepsilon \right)  .$
Similarly, we write $\widetilde{g}\in FHS\left(  \varepsilon \right)  $ and
$g\in FHS\left(  \varepsilon \right)  $ for geodesic rays. \newline If
$\widetilde{g}\in G\widetilde{S}$ we write $\varepsilon \left(  \widetilde
{g}\right)  $ for the $\sup \left \{  \varepsilon|\widetilde{g}\in FS\left(
\varepsilon \right)  \right \}  .$ Set $\varepsilon \left(  \widetilde{g}\right)
=0$ if there is no such $\varepsilon$ positive. Given $g\in GS,$ pick a lift
$\widetilde{g}\in G\widetilde{S}$ of $g$ and set $\varepsilon \left(  g\right)
:=\varepsilon \left(  \widetilde{g}\right)  .$ Clearly, $\varepsilon \left(
g\right)  $ does not depend on the choice of $\widetilde{g}.$ \newline By
writing $FS\left(  g\right)  $ (resp. $\left.  FS\left(  \widetilde{g}\right)
\right)  $ we mean the maximal flat strip of width $\varepsilon=\varepsilon
\left(  g\right)  $ (resp. $\left.  \varepsilon=\varepsilon \left(
\widetilde{g}\right)  \right)  $ containing $g$ (resp. $\widetilde{g}$).
Similarly, $FHS\left(  g\right)  $ and $FHS\left(  \widetilde{g}\right)  $ for
geodesic rays.
\end{notation}

Given a flat strip in $\widetilde{S}$ we may parametrize each geodesic line
$\{r\} \times \mathbb{R}$ by a mapping $\gamma^{r}:(-\infty,\infty
)\rightarrow \widetilde{S}$ such that $\left \{  \gamma^{r} (s) |r\in
[0,\varepsilon] \right \}  $ is identified with $[0,\varepsilon] \times \{s\} .$
We will call such a parametrization a \textit{normal parametrization } of the
given flat strip.

We show below (see Theorem \ref{no_strips}) that the image of a non-closed
geodesic in $S$ does not have positive distance from the set of conical
points. This implies that uniqueness of geodesic lines in $\widetilde{S}$ does
hold for all geodesic which do not project to closed geodesics in $S$ (cf
Lemma \ref{existence closed geodesic}). In other words, (maximal) flat strips
in $\widetilde{S}$ correspond precisely to closed geodesics in $S.$ In view of
the latter, we will use occasionally the following quotient space of geodesics.

For each flat strip $E$ isometric to $\left[  0,\varepsilon \right]
\times \mathbb{R},$ we identify each family of geodesics $g^{r},r\in \left[
0,\varepsilon \right]  $ which forms a normal parametrization of $E$ to a
unique geodesic line $g_{E}$ and denote the resulting quotient space by
$G_{0}\widetilde{S}.$ Thus, for every $r\in \lbrack0,\varepsilon]$ the points
$g^{r}(s)$ are identified to $g_{E}(s),$ where $s\in \mathbb{R}.$ Moreover, if
a family $g^{r},r\in \left[  0,\varepsilon \right]  $ forms a normal
parametrization of $E,$ so does the family $t\cdot g^{r},r\in \left[
0,\varepsilon \right]  .$ Thus, the geodesic flow on $GS$ restricts to an
action of $\mathbb{R}$ on $G_{0}\widetilde{S}.$ Observe that for any two
points $\xi,\eta \in \partial \widetilde{S}$ there exists a unique (up to
parametrization) geodesic $g$ in $G_{0}\widetilde{S}$ with $g\left(
-\infty \right)  =\xi$ and $g\left(  +\infty \right)  =\eta.$ Similarly, we
identify all local geodesics in $GS$ which are closed and parallel and, thus,
obtain the corresponding space $G_{0}S.$

The following theorem establishes the above mentioned uniqueness of
(non-closed) geodesic lines in $\widetilde{S}.$

\begin{theorem}
\label{no_strips}Let $g$ be a non-closed geodesic or geodesic ray in $S.$
Then
\[
d\left(  \operatorname{Im}g,\left \{  s_{1},...,s_{n}\right \}  \right)  =0.
\]

\end{theorem}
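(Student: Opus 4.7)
The plan is to prove the contrapositive: assuming that some lift $\widetilde{g}$ of $g$ satisfies $\widetilde{d}(\operatorname{Im}\widetilde{g},p_{S}^{-1}\{s_{1},\ldots,s_{n}\})\geq\delta$ for some $\delta>0$, which follows from $d(\operatorname{Im}g,\{s_{1},\ldots,s_{n}\})\geq\delta$ since $p_{S}$ is $1$-Lipschitz, I aim to show that $g$ is a closed geodesic.

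The first step is to turn the distance bound into a flat strip. The open $\delta/2$-neighborhood $N$ of $\operatorname{Im}\widetilde{g}$ is convex in the CAT(0) space $\widetilde{S}$, because the distance function to a geodesic is convex; hence $N$ is simply connected, and by the triangle inequality it contains no conical points. Therefore $N$ is a simply connected, locally Euclidean $2$-manifold, and a standard developing-map argument identifies it isometrically with an open Euclidean strip. In particular $\varepsilon(\widetilde{g})\geq\delta>0$, so we may form the maximal flat strip $\Sigma:=FS(\widetilde{g})$, of positive width $\varepsilon_{0}$.

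Next, I exploit the compactness of $S$: since $\Sigma$ has infinite Euclidean area while $S$ has finite area, $p_{S}|_{\Sigma}$ cannot be injective, so some $\gamma\in\Gamma\setminus\{1\}$ satisfies $\gamma(\Sigma)\cap\Sigma\neq\emptyset$. If $\gamma(\Sigma)=\Sigma$, then $\gamma$ restricts to a fixed-point-free isometry of a Euclidean strip and so acts as a translation along the $\mathbb{R}$-factor; it is then hyperbolic with $\widetilde{g}(\pm\infty)=\gamma(\pm\infty)$, and Lemma \ref{existence closed geodesic} gives that $\widetilde{g}$ projects to a closed geodesic of $S$, as required.

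The remaining subcase $\gamma(\Sigma)\neq\Sigma$ is where the main obstacle lies. If $\gamma(\Sigma)$ were parallel to $\Sigma$ and overlapped it non-trivially, the union would be a strictly wider flat strip, contradicting maximality; so $\Sigma$ and $\gamma(\Sigma)$ must cross at a positive angle, and $\Sigma\cap\gamma(\Sigma)$ is the Euclidean parallelogram $ABCD$ of Figure \ref{quardangle}. To rule this out, my plan is to use that maximality forces the boundary lines of each of $\Sigma$ and $\gamma(\Sigma)$ to have zero distance to the conical locus, and then to extend the sides of $ABCD$ along those boundaries until they reach the auxiliary points $E$, $F$, $G$ of the figure, chosen to be conical. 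The resulting polygonal region in $\widetilde{S}$ has interior angles, namely $\theta$, $\delta$, $\varepsilon$ in the figure, that satisfy two incompatible constraints: Euclidean flatness of the two strips fixes the angle sums around the regular vertices, whereas the CAT(0) inequality at each conical vertex of angle strictly greater than $2\pi$ produces a strict angular defect of the wrong sign. Reconciling these gives the contradiction. The geodesic-ray version is handled identically by substituting flat half-strips for flat strips; the area argument still furnishes a non-trivial translate $\gamma$, and the same dichotomy applies.
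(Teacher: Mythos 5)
Your reduction to a maximal flat (half-)strip $\Sigma$ of positive width, the area argument producing some $\gamma\in\Gamma\setminus\{1\}$ with $\gamma(\Sigma)\cap\Sigma\neq\emptyset$, and the stabilized case $\gamma(\Sigma)=\Sigma$ (which is essentially Lemma \ref{existence closed geodesic}) are all fine. The gap is in the transverse case, which is exactly where the content of the theorem lies, and your sketch for it does not work. First, maximality of $\Sigma$ only says that each boundary line has \emph{distance zero} from the (closed, discrete, non-compact) set of lifted conical points; this infimum need not be attained, and even when conical points do lie on a boundary line they can sit arbitrarily far from the parallelogram $\Sigma\cap\gamma(\Sigma)$. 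So you cannot ``extend the sides of $ABCD$ along those boundaries until they reach conical points $E$, $F$, $G$'' --- such points may simply not exist anywhere near the crossing. Second, even granting conical vertices on the resulting polygon, the angle count gives no contradiction: cone points of angle $>2\pi$ carry \emph{negative} curvature, so Gauss--Bonnet only forces the angle sum of a geodesic polygon to be at most its Euclidean value, which is entirely consistent with two flat strips crossing. Indeed the paper's Lemma \ref{euclidean} (the source of your Figure \ref{quardangle}) is a \emph{constructive} statement --- two crossing strips coexist and their union merely contains a flat quadrangle of width $\delta+\frac{\varepsilon}{2\cos\theta}$ and \emph{finite} length $\frac{\varepsilon}{2\sin\theta}$ --- not an impossibility statement. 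A single crossing at a fixed positive angle therefore yields only a bounded flat quadrangle, which contradicts nothing.

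What the paper does, and what your argument is missing, is a dynamical input that replaces your single group element by infinitely many near-parallel returns. Using Proposition \ref{u_ubar} (non-periodic orbits of the geodesic flow are not closed, so the orbit $\{t\cdot g\}$ accumulates outside itself) one picks $h_{0}\in\overline{\mathcal{U}}\setminus\mathcal{U}$ nearly realizing $\overline{\varepsilon}=\sup\{\varepsilon(h):h\in\overline{\mathcal{U}}\}$ and approximates it by translates $t_{n}\cdot g$; these give crossings of $FHS(\widetilde{h}_{0})$ at angles tending to $0$, so Lemma \ref{euclidean} together with Remark \ref{euclidean_remark} produces flat quadrangles of width at least $\varepsilon(h_{0})+\frac{\varepsilon}{2}$ and length tending to infinity, and the limit (Lemma \ref{epsilon}) is a flat half-strip of width exceeding $\overline{\varepsilon}$ --- that is the contradiction. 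Your proposal contains no mechanism for making the crossing angle small or the flat region unboundedly long, so the transverse case remains open and the proof is incomplete.
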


For its proof we need the following elementary lemma.

\begin{lemma}
\label{euclidean}Let $FS\left(  \varepsilon \right)  ,$ $FS\left(
\delta \right)  $ be two infinite flat strips in $\widetilde{S}$ of width
$\varepsilon,$ $\delta$ respectively with $0<\varepsilon \leq \delta.$ Assume
that $FS\left(  \varepsilon \right)  ,$ $FS\left(  \delta \right)  $ intersect
at an angle $\theta,$ with $0<\theta<\pi/2.$ Then, $FS\left(  \varepsilon
\right)  \cup FS\left(  \delta \right)  $ contains a right angle quadrangle of
width $\delta+\frac{\varepsilon}{2\cos \theta}$ and length $\frac{\varepsilon
}{2\sin \theta}.$
\end{lemma}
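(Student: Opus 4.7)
The plan is to develop the union $FS(\varepsilon)\cup FS(\delta)$ isometrically into the Euclidean plane and then construct the required rectangle by an explicit trigonometric argument. The step I expect to be the principal obstacle is the development itself: although each flat strip is flat by definition and convex in the $\mathrm{CAT}(0)$ space $\widetilde{S}$ (so their intersection is convex, hence connected, and the union is simply connected by Seifert--van Kampen), one must ensure that a conical singularity possibly lying on the boundary of one or both strips (e.g.\ at a corner of the intersection parallelogram) does not obstruct a global isometric embedding of the union into $\mathbb{R}^{2}$. This is secured by observing that at such a singularity each strip contributes only a half-disc of angle $\pi$ (by flatness), so the total angular extent of the union there is at most $2\pi$; simple connectedness then upgrades the local isometry to a global one.

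Once the union sits in $\mathbb{R}^{2}$, I would choose coordinates so that $FS(\delta)=\{(x,y):0\leq y\leq \delta\}$ and the core of $FS(\varepsilon)$ makes angle $\theta$ with the $x$-axis. After a translation along its core, the two boundary lines of $FS(\varepsilon)$ become $y\cos\theta-x\sin\theta=\pm \varepsilon/2$, so the horizontal cross-section of $FS(\varepsilon)$ at height $y_{0}$ is the interval
\[
\Bigl[y_{0}\cot\theta-\frac{\varepsilon}{2\sin\theta},\; y_{0}\cot\theta+\frac{\varepsilon}{2\sin\theta}\Bigr].
\]

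The required rectangle is then
\[
R:=\Bigl[\delta\cot\theta,\; \delta\cot\theta+\frac{\varepsilon}{2\sin\theta}\Bigr]\times\Bigl[0,\; \delta+\frac{\varepsilon}{2\cos\theta}\Bigr],
\]
a right-angle quadrangle of length $\varepsilon/(2\sin\theta)$ and width $\delta+\varepsilon/(2\cos\theta)$, as claimed. For $y\in[0,\delta]$ its horizontal slice lies in $FS(\delta)$ trivially; for $y=\delta+h$ with $h\in[0,\varepsilon/(2\cos\theta)]$, inclusion of the slice in $FS(\varepsilon)$ reduces to the elementary inequalities $0\leq h\cot\theta\leq \varepsilon/(2\sin\theta)$, equivalently $0\leq h\leq \varepsilon/(2\cos\theta)$, which hold by the choice of $h$. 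Hence $R\subset FS(\varepsilon)\cup FS(\delta)$, completing the argument.
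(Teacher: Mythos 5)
Your proof is correct and follows essentially the same route as the paper's: the paper simply reads the identical quantities $\frac{\varepsilon}{2\sin\theta}$ and $\delta+\frac{\varepsilon}{2\cos\theta}$ off a figure of the two developed strips, which is your coordinate computation done synthetically. Your explicit justification of the development step (convexity of the strips, the angle count at a common boundary singularity) fills in a point the paper dismisses as ``elementary''.
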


\begin{proof}
The proof is elementary. Figure \ref{quardangle} exhibits the details, where
the segment
\[
AD=\frac{1}{2}FC=\frac{1}{2}\frac{\varepsilon}{\sin \theta}%
\]
is understood as the length and
\[
GA=GB+BA=\delta+\frac{EB}{\cos \theta}=\delta+\frac{\varepsilon}{2\cos \theta
}>\delta+\frac{\varepsilon}{2}%
\]
the width.
\end{proof}

\begin{remark}
\label{euclidean_remark}Clearly, by letting $\theta \rightarrow0$ the length of
the (finite) flat quadrangle given by the above Lemma tends to $\infty,$ while
its width is bounded below by $\delta+\frac{\varepsilon}{2}.$
\end{remark}

\begin{lemma}
\label{doublep}Let $\widetilde{S}$ be the universal cover of e.s.c.s $S.$ Let
$\left[  x_{0},y_{0}\right]  $ be a geodesic segment containing a conical
point $s$ in its interior. Assume that both angles subtended by $\left[
x_{0},s\right]  $ and $\left[  s,y_{0}\right]  $ at $s$ (notation
$\measuredangle_{s}\left(  x_{0},y_{0}\right)  $) are strictly bigger than
$\pi,$ say, $\theta_{1}+\pi$ and $\theta_{2}+\pi$ with $\theta_{1}+\theta
_{2}+2\pi=\theta \left(  s\right)  .$ Then there exists an $\varepsilon>0$ such
that
\[
\forall x,y\mathrm{\  \ with\  \ }d\left(  x,x_{0}\right)  <\varepsilon
\mathrm{\  \ and\  \ }d\left(  y,y_{0}\right)  <\varepsilon \Rightarrow \left[
x,y\right]  \ni s.
\]

\end{lemma}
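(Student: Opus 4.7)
My plan is to reduce the lemma to two ingredients: (i) continuity of the ``direction from $s$'' map, and (ii) the standard characterization of local geodesics through a conical singularity, combined with the fact that local geodesics in the $\mathrm{CAT}(0)$ space $\widetilde{S}$ are global geodesics. Since $s$ lies in the interior of $[x_0,y_0]$, we have $x_0,y_0\neq s$. For any $z\neq s$ in a neighborhood of $x_0$ or $y_0$, the unique geodesic $[s,z]$ enters the local cone model $C(s,\theta(s))$ along some radial direction, which I encode by an angular coordinate $t(z)\in\mathbb{R}/\theta(s)\mathbb{Z}$. Continuous dependence of $\mathrm{CAT}(0)$ geodesics on their endpoints (uniformly on compact subsets) then shows that the map $z\mapsto t(z)$ is continuous on $\widetilde{S}\setminus\{s\}$.

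Next, I would invoke the criterion that on a cone $C(s,\theta(s))$ with $\theta(s)>2\pi$, a concatenation $[x,s]\cup[s,y]$ is locally geodesic at $s$ if and only if both side-angles $\measuredangle_s(x,y)$ (which sum to $\theta(s)$) are $\geq\pi$; combined with the $\mathrm{CAT}(0)$ property of $\widetilde{S}$, such a local geodesic then coincides with the unique segment $[x,y]$. By hypothesis, the two side-angles determined by $[x_0,s]$ and $[s,y_0]$ are $\pi+\theta_1$ and $\pi+\theta_2$ with $\theta_1,\theta_2>0$.

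Setting $\delta:=\min\{\theta_1,\theta_2\}>0$, I then use the continuity of $t(\cdot)$ to choose $\varepsilon>0$ so small that the balls $B(x_0,\varepsilon)$ and $B(y_0,\varepsilon)$ avoid $s$ and that the circular distances $|t(x)-t(x_0)|$ and $|t(y)-t(y_0)|$ in $\mathbb{R}/\theta(s)\mathbb{Z}$ are both less than $\delta/2$ for all $x\in B(x_0,\varepsilon)$ and $y\in B(y_0,\varepsilon)$. The two side-angles at $s$ determined by $[x,s]$ and $[s,y]$ then differ from $\pi+\theta_1$ and $\pi+\theta_2$ by less than $\delta$, hence both remain strictly greater than $\pi$. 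The criterion above yields $s\in[x,y]$, as desired. The only technical point is verifying continuity of $t(\cdot)$ at points away from $s$; this follows routinely from the cone structure around $s$ together with continuous variation of $\mathrm{CAT}(0)$ geodesics with endpoints, and I do not anticipate any serious obstacle beyond writing this out carefully.
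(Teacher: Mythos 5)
Your proposal is correct and follows essentially the same route as the paper: shrink $\varepsilon$ so that the directions at $s$ toward $x$ and $y$ deviate from those toward $x_0$ and $y_0$ by less than a fixed fraction of $\min\{\theta_1,\theta_2\}$, conclude both angles at $s$ stay strictly greater than $\pi$, and use that such a concatenation is a (local, hence global, by CAT(0)) geodesic. You merely make explicit two points the paper leaves implicit, namely the continuity of the direction map $z\mapsto t(z)$ and the local-to-global principle for geodesics in $\widetilde{S}$.
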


\begin{proof}
Set $\theta_{0}=\frac{1}{3}\min \left \{  \theta_{1},\theta_{2},\pi \right \}  .$
We may choose a neighborhood around $x_{0}$ of radius $\varepsilon>0$ such
that the angle $\measuredangle_{s}\left(  x_{0},x\right)  <\theta_{0}$ for all
$x$ with $d\left(  x,x_{0}\right)  <\varepsilon.$ Similarly for $y_{0}.$ Then
for any $x,y$ with $d\left(  x,x_{0}\right)  ,d\left(  y,y_{0}\right)
<\varepsilon$ we have that both angles subtended by $\left[  x,s\right]  $ and
$\left[  s,y\right]  $ at $s$ are $>\pi.$ Thus $\left[  x,s\right]
\cup \left[  s,y\right]  $ is a geodesic segment.
\end{proof}

\begin{lemma}
\label{epsilon}Assume that a sequence of geodesic rays $\left \{
g_{n}\right \}  $ converges to a geodesic ray $g.$ If for some $\varepsilon>0,$
$g_{n}\in FHS\left(  \varepsilon \right)  $ for all $n,$ then $g\in FHS\left(
\varepsilon \right)  .$\newline The same result holds for a sequence of
geodesic segments converging to a geodesic ray, provided that the lengths of
the segments tends to $\infty.$
\end{lemma}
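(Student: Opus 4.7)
The plan is to lift to the universal cover $\widetilde{S}$, extract subsequential limits of the flat half strips containing the lifts of $g_n$ via Arzel\`a--Ascoli (valid because $\widetilde{S}$ is proper), and then identify the limit configuration as a flat half strip of width $\varepsilon$ containing a lift of $g$.

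First I would fix a lift $\widetilde{g}$ of $g$ and, by composing with suitable deck transformations, choose lifts $\widetilde{g}_n$ of $g_n$ with $\widetilde{g}_n(0)\to\widetilde{g}(0)$; the covering-space lifting lemma then upgrades this to $\widetilde{g}_n\to\widetilde{g}$ uniformly on compact subsets of $[0,\infty)$. For each $n$, the hypothesis supplies a flat half strip $E_n\subset\widetilde{S}$ isometric to $[0,\varepsilon]\times[0,\infty)$ with $\widetilde{g}_n$ identified to $\{r_n\}\times[0,\infty)$ for some $r_n\in[0,\varepsilon]$. Let $\widetilde{h}_n^{-}$ and $\widetilde{h}_n^{+}$ denote the boundary rays of $E_n$ corresponding to $\{0\}\times[0,\infty)$ and $\{\varepsilon\}\times[0,\infty)$; their origins lie within distance $\varepsilon$ of the convergent sequence $\{\widetilde{g}_n(0)\}$, hence are bounded. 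Passing to a subsequence, I may assume $r_n\to r\in[0,\varepsilon]$, and by Arzel\`a--Ascoli applied to the $1$-Lipschitz rays $\widetilde{h}_n^{\pm}$ in the proper space $\widetilde{S}$, that $\widetilde{h}_n^{\pm}\to\widetilde{h}^{\pm}$ for geodesic rays $\widetilde{h}^{\pm}$.

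Next I would show that $\widetilde{h}^-$ and $\widetilde{h}^+$ bound a flat half strip $E$ of width $\varepsilon$ which contains $\widetilde{g}$. Continuity of $\widetilde{d}$ promotes $\widetilde{d}(\widetilde{h}_n^-(s),\widetilde{h}_n^+(s))=\varepsilon$ to $\widetilde{d}(\widetilde{h}^-(s),\widetilde{h}^+(s))=\varepsilon$ for every $s\geq 0$, and the standard CAT(0) flat-strip theorem (Proposition 5.8 of \cite{[Bal]}) applied on each finite segment---or, equivalently, passing to the limit in the isometric embeddings $[0,\varepsilon]\times[0,T]\hookrightarrow E_n$ and letting $T\to\infty$---delivers such an $E$. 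Moreover $\widetilde{d}(\widetilde{g}(s),\widetilde{h}^-(s))=r$ and $\widetilde{d}(\widetilde{g}(s),\widetilde{h}^+(s))=\varepsilon-r$, whose sum equals $\widetilde{d}(\widetilde{h}^-(s),\widetilde{h}^+(s))$, so $\widetilde{g}(s)$ lies on the unique geodesic joining $\widetilde{h}^-(s)$ to $\widetilde{h}^+(s)$, i.e.\ inside $E$. This gives $\widetilde{g}\in FHS(\varepsilon)$, whence $g\in FHS(\varepsilon)$.

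For the second assertion, the convergence of geodesic segments to a ray forces the lengths $L_n\to\infty$; the boundary segments of the flat strips containing each lift of $g_n$ then have bounded origins and lengths tending to infinity, and Arzel\`a--Ascoli on an exhausting sequence of compact sub-intervals of $[0,\infty)$ produces the same limit rays $\widetilde{h}^{\pm}$, so the rest of the argument goes through verbatim. The main obstacle, I expect, is justifying the flatness of the limit half strip from the constant-distance condition alone; once the CAT(0) flat-strip principle is invoked, the remainder is routine application of continuity of $\widetilde{d}$. A secondary technicality is the careful choice of lifts in the first step so that convergence in $GS$ upgrades to convergence in $G\widetilde{S}$.
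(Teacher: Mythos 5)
Your proof is correct, but it takes a genuinely different route from the paper's. The paper argues by contradiction: assuming $\varepsilon\left(  g\right)  <\varepsilon,$ it locates conical singularities at distance less than $\varepsilon$ from $\operatorname{Im}\widetilde{g}$ which obstruct widening the strip around $\widetilde{g}$ (with a separate case when $\operatorname{Im}g$ itself passes through a conical point, handled via Lemma \ref{doublep} and a claim about the side on which the angle $\pi$ occurs), and then derives a numerical contradiction by comparing these distances with the widths $\varepsilon_{1,N}+\varepsilon_{2,N}=\varepsilon$ of the two sub-strips into which $\widetilde{g}_{N}$ splits its ambient $FHS\left(  \varepsilon \right)  .$ Your argument is soft: after normalizing lifts so that $\widetilde{g}_{n}\rightarrow \widetilde{g}$ in $G\widetilde{S},$ you extract via Arzel\`{a}--Ascoli a limit of the isometric parametrizations $F_{n}\colon \lbrack0,\varepsilon]\times \lbrack0,\infty)\rightarrow \widetilde{S}$ of the strips $E_{n};$ since distances pass to the limit, the limit map is again an isometric embedding whose image is a flat half strip of width $\varepsilon$ containing $\widetilde{g}=F(r,\cdot).$ This avoids the conical-point case analysis entirely and would work in any proper CAT(0) space, whereas the paper's proof is tied to the Euclidean-cone structure. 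One caveat: of your two proposed justifications of flatness, only the second is sound. The constant-distance condition $\widetilde{d}(\widetilde{h}^{-}(s),\widetilde{h}^{+}(s))=\varepsilon$ combined with Proposition 5.8 of \cite{[Bal]} does not suffice as stated, because the Flat Strip Theorem requires bi-infinite parallel lines, and constant pointwise distance between two rays does not by itself yield flatness near $s=0.$ Your alternative --- passing to the limit in the isometric embeddings of $[0,\varepsilon]\times \lbrack0,T]$ and letting $T\rightarrow \infty$ --- is the one that works, and it in fact makes the intermediate discussion of $\widetilde{h}^{\pm}$ and of $\widetilde{g}(s)$ lying on the geodesic from $\widetilde{h}^{-}(s)$ to $\widetilde{h}^{+}(s)$ unnecessary, since $\widetilde{g}$ is exhibited directly as $F(\{r\} \times \lbrack0,\infty)).$ The treatment of the segment case by exhaustion is fine.
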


\begin{proof}
We write the proof for geodesic rays as the proof for segments is
identical.\newline Assume that $\varepsilon \left(  g\right)  <\varepsilon.$
Note that $\varepsilon \left(  g\right)  $ may be $0.$ We first treat the case
where $g$ contains a conical singularity. In this case, we have that, for some
lift $\widetilde{g}$ of $g$ and pre-image $\widetilde{s}$ of a conical
singularity $s\in \left \{  s_{1},...,s_{n}\right \}  ,$ $\widetilde{g}\left(
t_{s}\right)  =\widetilde{s}$ for some $t_{s}\in \left(  0,\infty \right)  .$
There are two angles formed by the segments of $\widetilde{g}$ at
$\widetilde{g}\left(  t_{s}\right)  .$ One of them must be equal to $\pi,$
otherwise, by Lemma \ref{doublep} and large enough $N$ we have that the image
of $\widetilde{g_{N}}$ contains $\widetilde{s},$ a contradiction. Without loss
of generality, assume that the angle on the left, according to the
parametrization of the ray $\widetilde{g},$ is equal to $\pi.$

If $\operatorname{Im}\widetilde{g}$ contains another pre-image $\widetilde
{s}^{\prime}\neq \widetilde{s}$ of a conical singularity $s^{\prime}\in \left \{
s_{1},...,s_{n}\right \}  ,$ that is, $\widetilde{g}\left(  t_{s^{\prime}%
}\right)  =\widetilde{s}^{\prime}$ for some $t_{s^{\prime}}\in \left(
0,\infty \right)  ,$ then, for the same reason as above, one of the two angles
formed by the segments of $\widetilde{g}$ at $\widetilde{g}\left(
t_{s^{\prime}}\right)  $ must be equal to $\pi.$

\noindent \textit{Claim:} The angle subtended at $\widetilde{g}\left(
t_{s^{\prime}}\right)  $ which is equal to $\pi$ is also on the left,
according to the parametrization, of the ray $\widetilde{g}.$

\noindent \textit{Proof of Claim:} Assume the Claim does not hold and without
loss of generality assume $t_{s^{\prime}}>t_{s}.$ Pick $N\in \mathbb{N}$ such
that%
\[
d\left(  \widetilde{g}\left(  t\right)  ,\widetilde{g}_{N}\left(  t\right)
\right)  <\varepsilon/4\mathrm{\  \ for\  \ all\  \ }t\in \left[  0,2t_{s^{\prime
}}\right]  .
\]
Then, $\widetilde{g}_{N}\left(  t_{s}\right)  $ is on the left of
$\operatorname{Im}\widetilde{g}$ and $\widetilde{g}_{N}\left(  t_{s^{\prime}%
}\right)  =\widetilde{s}^{\prime}$ is on the right of $\operatorname{Im}%
\widetilde{g}.$ Thus, the singularity $\widetilde{s}$ lies on the right of
$\operatorname{Im}\widetilde{g}_{N}$ at distance $<\varepsilon/2$ and
$\widetilde{s}^{\prime}$ lies on the left of $\operatorname{Im}\widetilde
{g}_{N}$ at distance $<\varepsilon/2.$ This is a contradiction since
$\widetilde{g}_{N}$ is contained in $FHS\left(  \varepsilon \right)  .$ This
completes the proof of the Claim.

Using the Claim we have that the assumption $\varepsilon \left(  g\right)
<\varepsilon$ implies that there must exist a conical singularity
$\widetilde{s_{1}}\in \widetilde{S}$ on the left of the ray $\widetilde{g}$
such that
\[
0<d\left(  \widetilde{s_{1}},\operatorname{Im}\widetilde{g}\right)
=\varepsilon_{1}^{\prime}<\varepsilon.
\]
Otherwise, we would be able to find a flat half strip containing
$\widetilde{g}$ of width $\varepsilon^{\prime}>\varepsilon \left(  g\right)  .$
Let $\left[  \widetilde{s_{1}},\widetilde{g}\left(  t_{1}\right)  \right]  $
$,t_{1}\in$ $\left[  0,\infty \right)  $ be the geodesic segment of length
$\varepsilon_{1}^{\prime}$ realizing the distance of $\widetilde{s_{1}}$ from
$\operatorname{Im}\widetilde{g}.$ For simplicity we assume that $t_{1}>t_{s}.$
The other case is treated similarly.

Each geodesic $g_{n}$ splits the flat half strip $FHS\left(  \varepsilon
\right)  $ to which it is contained into two flat half strips whose
intersection is $\operatorname{Im}\widetilde{g}_{n}.$ Denote by $\varepsilon
_{1,n}$ and $\varepsilon_{2,n}$ the widths of these flat half strips with
$\varepsilon_{1,n}+\varepsilon_{2,n}=\varepsilon.$ Clearly, we may assume that
$\widetilde{s_{1}}$ is on the left of the ray $\widetilde{g_{n}}$ and,
moreover, by Lemma \ref{doublep} it follows that for large enough $n$ the
conical singularity $\widetilde{s}=$ $\widetilde{g}\left(  t_{s}\right)  $ is
on the right of the ray $\widetilde{g_{n}},$ otherwise $\widetilde{g_{n}}$
cannot converge to $\widetilde{g}.$ Choose $N\in \mathbb{N}$ such that
\begin{equation}
\forall t\in \left[  0,t_{1}\right]  ,d\left(  \widetilde{g}_{N}\left(
t\right)  ,\widetilde{g}\left(  t\right)  \right)  <\frac{\varepsilon
-\varepsilon_{1}^{\prime}}{3}. \label{contra1}%
\end{equation}
Since $\widetilde{s}$ does not belong to the flat half strip containing
$\operatorname{Im}\widetilde{g_{N}},$ we have $\varepsilon_{2,N}<d\left(
\widetilde{g_{N}}\left(  t_{s}\right)  ,\widetilde{s}\right)  ,$ thus
\begin{equation}
\varepsilon_{2,N}<d\left(  \widetilde{g_{N}}\left(  t_{s}\right)
,\widetilde{g}\left(  t_{s}\right)  \right)  <\frac{\varepsilon-\varepsilon
_{1}^{\prime}}{3}. \label{contra2}%
\end{equation}
Similarly,
\[
\varepsilon_{1,N}<d\left(  \widetilde{g_{N}}\left(  t_{1}\right)
,\widetilde{s_{1}}\right)  \leq d\left(  \widetilde{g_{N}}\left(
t_{1}\right)  ,\widetilde{g}\left(  t_{1}\right)  \right)  +d\left(
\widetilde{g}\left(  t_{1}\right)  ,\widetilde{s_{1}}\right)
\]
which implies that
\begin{equation}
\varepsilon_{1,N}<\frac{\varepsilon-\varepsilon_{1}^{\prime}}{3}%
+\varepsilon_{1}^{\prime}. \label{contra3}%
\end{equation}
Combining inequalities \ref{contra2},\ref{contra3} we obtain%
\[
\varepsilon=\varepsilon_{1,N}+\varepsilon_{2,N}<\frac{2}{3}\left(
\varepsilon-\varepsilon_{1}^{\prime}\right)  +\varepsilon_{1}^{\prime}%
\]
a contradiction, which completes the proof in the case $g$ contains a conical singularity.

Assume now that $g$ does not contain a conical singularity. Then the
assumption $\varepsilon \left(  g\right)  <\varepsilon$ implies that for some
(hence any) lift $\widetilde{g}$ of $g,$ there must exist conical
singularities $\widetilde{s_{1}},\widetilde{s_{2}}\in \widetilde{S}$ such that

\begin{itemize}
\item $d\left(  \widetilde{s_{1}},\operatorname{Im}\widetilde{g}\right)
=\varepsilon_{1}^{\prime}>0,$ $d\left(  \widetilde{s_{2}},\operatorname{Im}%
\widetilde{g}\right)  =\varepsilon_{2}^{\prime}>0,$

\item $0<\varepsilon_{1}^{\prime}+\varepsilon_{2}^{\prime}<\varepsilon,$ and

\item $\widetilde{s_{1}},\widetilde{s_{2}}$ do not belong to the same side of
$\operatorname{Im}\widetilde{g}.$
\end{itemize}

Otherwise, we would be able to extend the flat half strip containing
$\widetilde{g}$ to a flat half strip of width $\varepsilon^{\prime
}>\varepsilon \left(  g\right)  .$ Let $\left[  \widetilde{s_{1}},\widetilde
{g}\left(  t_{1}\right)  \right]  $ (resp. $\left.  \left[  \widetilde{s_{2}%
},\widetilde{g}\left(  t_{2}\right)  \right]  \right)  ,$ where $t_{1}\in$
$\left[  0,\infty \right)  $ (resp. $\left.  t_{2}\in \left[  0,\infty \right)
\right)  ,$ be the geodesic segment of length $\varepsilon_{1}^{\prime}$
(resp. $\varepsilon_{2}^{\prime}$) realizing the distance of $\widetilde
{s_{1}}$ (resp. $\widetilde{s_{2}})$ from $\operatorname{Im}\widetilde{g}.$ We
may assume that $t_{2}>t_{1}.$

Choose $N\in \mathbb{N}$ such that%
\begin{equation}
\forall t\in \left[  0,t_{2}\right]  ,d\left(  \widetilde{g}_{N}\left(
t\right)  ,\widetilde{g}\left(  t\right)  \right)  <\frac{\varepsilon-\left(
\varepsilon_{1}^{\prime}+\varepsilon_{2}^{\prime}\right)  }{3}. \label{contra}%
\end{equation}
\noindent If both inequalities
\begin{equation}
d\left(  \widetilde{s_{1}},\widetilde{g_{N}}\left(  t_{1}\right)  \right)
<\varepsilon_{1,N}\mathrm{\  \ and\  \ }d\left(  \widetilde{s_{1}}%
,\widetilde{g_{N}}\left(  t_{1}\right)  \right)  <\varepsilon_{2,N}
\label{twoineq}%
\end{equation}
hold, then $\widetilde{s_{1}}$ belongs to the flat half strip $FHS\left(
\varepsilon \right)  $ containing $\widetilde{g}_{N},$ a contradiction. Thus,
at least one of the inequalities in \ref{twoineq} does not hold. Without loss
of generality we may assume that the inequality
\[
d\left(  \widetilde{s_{1}},\widetilde{g_{N}}\left(  t_{1}\right)  \right)
>\varepsilon_{1,N}%
\]
holds and, therefore, the inequality
\begin{equation}
\varepsilon_{1}^{\prime}+d\left(  \widetilde{g}\left(  t_{1}\right)
,\widetilde{g_{N}}\left(  t_{1}\right)  \right)  =d\left(  \widetilde{s_{1}%
},\widetilde{g}\left(  t_{1}\right)  \right)  +d\left(  \widetilde{g}\left(
t_{1}\right)  ,\widetilde{g_{N}}\left(  t_{1}\right)  \right)  >\varepsilon
_{1,N} \label{1stfinal}%
\end{equation}
holds. Similarly, and using the assumption that $\widetilde{s_{1}}%
,\widetilde{s_{2}}$ do not belong to the same side of $\operatorname{Im}%
\widetilde{g}$ we have
\begin{equation}
\varepsilon_{2}^{\prime}+d\left(  \widetilde{g}\left(  t_{2}\right)
,\widetilde{g_{N}}\left(  t_{2}\right)  \right)  =d\left(  \widetilde{s_{2}%
},\widetilde{g}\left(  t_{2}\right)  \right)  +d\left(  \widetilde{g}\left(
t_{2}\right)  ,\widetilde{g_{N}}\left(  t_{2}\right)  \right)  >\varepsilon
_{2,N} \label{2ndfinal}%
\end{equation}
Combining the above inequalities we reach
\[
\varepsilon_{1}^{\prime}+\varepsilon_{2}^{\prime}+d\left(  \widetilde
{g}\left(  t_{1}\right)  ,\widetilde{g_{N}}\left(  t_{1}\right)  \right)
+d\left(  \widetilde{g}\left(  t_{2}\right)  ,\widetilde{g_{N}}\left(
t_{2}\right)  \right)  >\varepsilon_{1,N}+\varepsilon_{2,N}=\varepsilon
\]
which implies that
\[
2\max \left \{  d\left(  \widetilde{g}\left(  t_{1}\right)  ,\widetilde{g_{N}%
}\left(  t_{1}\right)  \right)  ,d\left(  \widetilde{g}\left(  t_{2}\right)
,\widetilde{g_{N}}\left(  t_{2}\right)  \right)  \right \}  >\varepsilon
-\left(  \varepsilon_{1}^{\prime}+\varepsilon_{2}^{\prime}\right)
\]
Thus for some $t\in \left \{  t_{1},t_{2}\right \}  $ we have that
\[
d\left(  \widetilde{g}_{N}\left(  t\right)  ,\widetilde{g}\left(  t\right)
\right)  >\frac{\varepsilon-\left(  \varepsilon_{1}^{\prime}+\varepsilon
_{2}^{\prime}\right)  }{2}%
\]
a contradiction by (\ref{contra}).
\end{proof}

\vspace{2mm}

For the proof of Theorem \ref{no_strips} we also need the following
proposition which is of interest in its own right.

\begin{proposition}
\label{u_ubar}Let $g$ be a geodesic which does not contain a conical point,
$\mathcal{U}=\left \{  t\cdot g|t\in \mathbb{R}\right \}  $ and $\overline
{\mathcal{U}}$ its closure in $GS$ under the compact open topology. Then the
following are equivalent: \newline(a) $g$ is not periodic.\newline(b)
$\overline{\mathcal{U}}\setminus \mathcal{U}\neq \emptyset,$ that is,
$\mathcal{U}$ is not closed.
\end{proposition}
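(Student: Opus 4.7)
I would split the argument along the two implications.

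For (b) $\Rightarrow$ (a), I argue by contrapositive. If $g$ is periodic with some period $T>0$, then the continuous orbit map $t\mapsto t\cdot g$ from $\mathbb{R}$ to $GS$ factors through the compact quotient $\mathbb{R}/T\mathbb{Z}$. Consequently $\mathcal{U}$ is the continuous image of a compact space, hence compact, and therefore closed in the Hausdorff space $GS$, so $\overline{\mathcal{U}}=\mathcal{U}$.

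For (a) $\Rightarrow$ (b) the strategy is to exhibit a limit geodesic $h\in\overline{\mathcal{U}}$ that passes through a conical point, and then invoke the hypothesis that $g$ itself avoids conical points to conclude $h\notin\mathcal{U}$. Since $g$ is not closed, Theorem \ref{no_strips} supplies a sequence $t_n\in\mathbb{R}$ together with (after extracting a subsequence, using finiteness of the conical set) a conical point $s$ such that $g(t_n)\to s$. I would then lift as follows: fix a lift $\widetilde{g}\in G\widetilde{S}$ of $g$, choose preimages $\widetilde{s}_n\in p_S^{-1}(s)$ with $\widetilde{d}(\widetilde{g}(t_n),\widetilde{s}_n)\to 0$, and, using that $p_S^{-1}(s)$ is a single $\Gamma$-orbit, write $\widetilde{s}_n=\gamma_n^{-1}\widetilde{s}$ for some $\gamma_n\in\Gamma$ and a fixed preimage $\widetilde{s}$. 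Setting $\widetilde{g}_n:=\gamma_n\circ(t_n\cdot\widetilde{g})$ yields a sequence of geodesic lines of $\widetilde{S}$ with $p_S\circ\widetilde{g}_n=t_n\cdot g$ and $\widetilde{g}_n(0)=\gamma_n\widetilde{g}(t_n)\to\widetilde{s}$.

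Since $\widetilde{S}$ is proper and the $\widetilde{g}_n$ are uniformly 1-Lipschitz with $\widetilde{g}_n(0)$ bounded, Arzela--Ascoli produces a subsequence converging uniformly on compacta to some map $\widetilde{h}:\mathbb{R}\to\widetilde{S}$. Passing to the limit in the identity $\widetilde{d}(\widetilde{g}_n(s),\widetilde{g}_n(t))=|s-t|$ shows $\widetilde{h}$ is itself an isometry, hence a geodesic line, with $\widetilde{h}(0)=\widetilde{s}$. Projecting gives $t_n\cdot g\to h:=p_S\circ\widetilde{h}$ in $GS$, and $h(0)=s$ is a conical point. If $h=t_0\cdot g$ for some $t_0\in\mathbb{R}$, then $g(t_0)=h(0)=s$, contradicting the assumption that $g$ avoids conical points; therefore $h\in\overline{\mathcal{U}}\setminus\mathcal{U}$, as required.

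The step that does the real work is Theorem \ref{no_strips}: without the a priori fact that a non-closed geodesic approaches the conical set, there is no obvious candidate for a limit geodesic having a distinguishing property (passing through a conical point) not shared by any member of $\mathcal{U}$. The remaining ingredients---the Arzela--Ascoli extraction in the proper space $\widetilde{S}$, and the fact that a compact-open limit of geodesic lines in a proper CAT(0) space is again a geodesic line---are routine, so I expect no serious obstacle beyond correctly setting up the $\Gamma$-translates $\gamma_n$ so that the sequence $\widetilde{g}_n(0)$ is actually pinned to a single preimage of $s$.
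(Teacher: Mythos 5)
Your proof of the easy implication (periodic $\Rightarrow$ orbit closed) is fine and is essentially what the paper does. The problem is in (a) $\Rightarrow$ (b): you invoke Theorem \ref{no_strips} to produce times $t_{n}$ with $g\left(  t_{n}\right)  $ converging to a conical point. In this paper, however, Proposition \ref{u_ubar} is stated and proved precisely as a tool for the proof of Theorem \ref{no_strips}: the paper's proof of that theorem begins by using Proposition \ref{u_ubar} to select an element $h_{0}\in \overline{\mathcal{U}}\setminus \mathcal{U}$. So your argument is circular --- the theorem you lean on is downstream of the very proposition you are asked to prove. Unless you supply an independent proof of Theorem \ref{no_strips}, the deduction does not stand. (Granting that theorem, the rest of your construction --- pinning the lifts by suitable $\gamma_{n}\in \Gamma$, Arzel\`{a}--Ascoli in the proper space $\widetilde{S}$, and the observation that the limit passes through a conical point while no translate of $g$ does --- is sound.)

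The paper's own argument is self-contained and of a different flavor. Assuming $\mathcal{U}=\overline{\mathcal{U}}$ for a non-periodic $g,$ compactness of $GS$ gives $t_{n}\cdot g\rightarrow t_{0}\cdot g$ for some $t_{n}\rightarrow \infty,$ hence every point of $\mathcal{U}$ is a limit of translates $t_{n}\cdot g$ with $t_{n}\rightarrow \infty.$ One then intersects $\operatorname{Im}g$ with a closed disk $\overline{D}$ disjoint from the conical set: the resulting chords $\sigma_{j}$ form a countable family (their endpoint times are discrete because non-nullhomotopic loops have length bounded below), they are pairwise distinct (a repetition would force $g$ to be periodic, using that $\operatorname{Im}g$ avoids conical points), and the recurrence property makes the set of endpoint pairs $\left(  \sigma_{j}\left(  -\right)  ,\sigma_{j}\left(  +\right)  \right)  $ a perfect, hence uncountable, subset of $\partial D\times \partial D$ --- a contradiction. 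If you want to keep your overall strategy, you must replace the appeal to Theorem \ref{no_strips} by a direct argument of this recurrence/counting type.
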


\begin{proof}
Clearly, if $g$ is periodic then $\mathcal{U}$ is closed. For the converse,
assume, on the contrary, that for a non-periodic $g$ we have $\overline
{\mathcal{U}}=\mathcal{U}$. Pick a sequence $\left \{  t_{n}\right \}  $
converging to $+\infty.$ By Proposition \ref{epitelous} $GS$ is (sequentially)
compact, thus, we have, up to a subsequence, that the sequence $\left \{
t_{n}\cdot g\right \}  $ converges to a point in $\mathcal{U}$, say,%
\[
t_{n}\cdot g\rightarrow t_{0}\cdot g
\]
for some $t_{0}\in \mathbb{R}.$ Clearly, by the definition of the compact open
topology, for any $C\in \mathbb{R},$ we have
\[
\left(  t_{n}+C\right)  \cdot g\rightarrow \left(  t_{0}+C\right)  \cdot g
\]
In other words, we have the following property:

\begin{description}
\item[($*$)] every point $t\cdot g$ in $\mathcal{U}$ is the limit of a
sequence of geodesics $\left \{  t_{n}\cdot g\right \}  \subset \mathcal{U}$ for
some sequence $t_{n}\rightarrow \infty.$
\end{description}

\noindent Since $\operatorname{Im}g$ does not contain a conical point, we may
pick an open disk $D$ such that
\[
\overline{D}\mathrm{\ is\ embedded\ in\ }S,D\cap \operatorname{Im}%
g\neq \emptyset \mathrm{\ and\ }\overline{D}\cap \left \{  s_{1},...,s_{n}%
\right \}  =\emptyset.
\]
Then $\operatorname{Im}g\cap \overline{D}$ consists of geodesic segments
$\sigma_{j},j\in J$ of the form $g|_{\left[  t_{j}^{-},t_{j}^{+}\right]  }$
for some $t_{j}^{-}<t_{j}^{+}\in \mathbb{R}$ with endpoints $\sigma_{j}\left(
-\right)  =g\left(  t_{j}^{-}\right)  $ and $\sigma_{j}\left(  +\right)
=g\left(  t_{j}^{+}\right)  $ contained in $\partial D.$ The degenerate cases
$t_{j}^{-}=t_{j}^{+}$ where the segment $\sigma_{j}$ is just one point are
excluded from the collection. As the lengths of the non-null homotopic loops
in $S$ are bounded away from $0,$ the set
\[
\left \{  \left \vert t_{j}^{+}-t_{j^{\prime}}^{+}\right \vert \biggm \vert j\neq
j^{\prime}\in J\right \}
\]
is bounded below which implies that the set $\left \{  t_{j}^{+}|j\in
J\right \}  $ is a discrete subset of $\mathbb{R},$ hence $J$ is countable.
Thus, we may enumerate the segments $\sigma_{j}$ by considering $J\subseteq
\mathbb{Z}$ and $t_{j}^{-}<t_{j}^{+}<t_{j+1}^{-}<t_{j+1}^{+}$ for all $j.$

Without loss of generality, we may assume that $g\left(  0\right)  \in
\sigma_{0}\subset \operatorname{Im}g\cap \overline{D}.$ By ($\ast$), there
exists a sequence $\left \{  t_{m}\right \}  _{m\in \mathbb{R}}$ with
$t_{m}\rightarrow \infty$ such that $t_{m}\cdot g\rightarrow g.$ In particular,
there exists a sequence of geodesic segments $\sigma_{j_{m}}\backepsilon
g\left(  t_{m}\right)  $ approximating $\sigma_{0}.$ Thus, $J$ is an infinite set.

We next show that the corresponding geodesic segments $\left \{  \sigma
_{j}|j\in J\right \}  $ are infinitely many. For, if $\sigma_{j}\equiv
\sigma_{j^{\prime}}$ for some $j<j^{\prime}$ then, as $\operatorname{Im}g$
does not contain conical points, $g$ must be periodic with period
$t_{j^{\prime}}^{+}-t_{j}^{+}.$ This shows that $J$ is a countably infinite
set and so is the set
\[
\partial^{2}J:=\left \{  \left(  \sigma_{j}\left(  -\right)  ,\sigma_{j}\left(
+\right)  \right)  |j\in J\right \}  .
\]

By property ($\ast$) each geodesic segment $\sigma_{j}$ is the limit of a
sequence of segments in $\operatorname{Im}g\cap \overline{D}=\left \{
\sigma_{j},j\in J\right \}  .$ It follows that the set $\partial^{2}J$ is a
perfect subset of $\partial D\times \partial D,$ thus, uncountable, a contradiction.
\end{proof}

Using standard compactness arguments and the natural projection $GS\rightarrow
S$ given by $\gamma \rightarrow \gamma(0)$ it can be easily seen that
$\mathcal{U}\subset GS$ is not closed if and only if $\operatorname{Im}%
g\subset S$ is not closed. Consequently, we have the following

\begin{corollary}
Let $g$ be a geodesic which does not contain a conical point. Then $g$ is
closed if and only if $\operatorname{Im}g$ is a closed set.
\end{corollary}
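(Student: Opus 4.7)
The plan is to prove the two implications separately. The forward direction is immediate from the definitions: if $g$ is closed then $g:\mathbb{R} \to S$ is periodic with some period $T$, and so it factors through the compact quotient $\mathbb{R}/T\mathbb{Z}$. Consequently $\operatorname{Im}g$ is the continuous image of a compact space, hence compact and therefore closed in $S$.

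For the converse I would argue the contrapositive: if $g$ is not closed, then $\operatorname{Im}g$ fails to be closed. The key input is Theorem \ref{no_strips}, which gives $d(\operatorname{Im}g, \{s_1,\ldots,s_n\}) = 0$. Picking a sequence $p_k \in \operatorname{Im}g$ with $d(p_k, \{s_1,\ldots,s_n\}) \to 0$ and using that the set of conical points is finite, I can pass to a subsequence so that $p_k \to s_i$ for a single conical point $s_i$. This exhibits $s_i$ as a point of $\overline{\operatorname{Im}g}$. The standing hypothesis of the corollary, that $\operatorname{Im}g$ contains no conical point, then forces $s_i \notin \operatorname{Im}g$, so $\operatorname{Im}g \subsetneq \overline{\operatorname{Im}g}$ and $\operatorname{Im}g$ is not closed.

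There is no real obstacle once Theorem \ref{no_strips} is available; the argument is essentially tautological at that point. The subtle logical point worth underlining is that both hypotheses of the corollary are used: Theorem \ref{no_strips} supplies conical points in the closure of $\operatorname{Im}g$, while the hypothesis that $g$ avoids conical singularities is precisely what converts their presence in the closure into a genuine failure of closedness.

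One could alternatively route the proof through Proposition \ref{u_ubar} and the equivalence, stated in the paragraph preceding the statement, that $\mathcal{U}\subset GS$ is closed iff $\operatorname{Im}g\subset S$ is closed. The direction $\operatorname{Im}g$ not closed $\Rightarrow \mathcal{U}$ not closed follows easily from compactness of $GS$ and continuity of the projection $\gamma \mapsto \gamma(0)$, but the reverse direction ultimately again invokes Theorem \ref{no_strips} (to rule out a limit geodesic in $\overline{\mathcal{U}}\setminus \mathcal{U}$ whose image would otherwise lie inside the closed set $\operatorname{Im}g$ with a possibly reversed direction). The direct approach above is therefore cleaner.
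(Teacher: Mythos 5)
Your argument is correct, but it is not the route the paper takes. The paper deduces this corollary from Proposition \ref{u_ubar} combined with the remark immediately preceding the statement: $g$ is periodic if and only if $\mathcal{U}=\{\,t\cdot g \mid t\in\mathbb{R}\,\}$ is closed in $GS$ (Proposition \ref{u_ubar}, where the hypothesis that $\operatorname{Im}g$ avoids conical points enters through the Observation in that proof), and $\mathcal{U}$ is closed if and only if $\operatorname{Im}g$ is closed (the compactness remark). In particular, the paper proves the corollary \emph{before} and independently of Theorem \ref{no_strips}; indeed the proof of Theorem \ref{no_strips} itself relies on Proposition \ref{u_ubar}. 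Your proof instead imports Theorem \ref{no_strips} for the contrapositive of the converse. This is logically sound --- the proof of Theorem \ref{no_strips} nowhere uses the corollary, so there is no circularity --- and it has the virtue of isolating exactly where the no-conical-points hypothesis is used, while avoiding the equivalence ``$\mathcal{U}$ closed iff $\operatorname{Im}g$ closed,'' which the paper leaves to the reader and which, as you observe in your final paragraph, is not entirely trivial in one direction. The trade-off is that you invoke the paper's main theorem to establish what the paper presents as an easy consequence of a preparatory proposition; the paper's route buys independence from Theorem \ref{no_strips} at the cost of that hand-waved compactness remark.
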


\textbf{Proof of Theorem \ref{no_strips}} Assume, on the contrary, that
\[
d\left(  \operatorname{Im}g,\left \{  s_{1},...,s_{n}\right \}  \right)  >0.
\]
Then the maximal flat half strip $FHS\left(  \widetilde{g}\right)
\subset \widetilde{S}$ containing a lift $\widetilde{g}$ of $g$ has width
$\varepsilon \left(  g\right)  =\varepsilon>0.$ Let $\mathcal{U}=\left \{
t\cdot g|t\in \left[  0,\infty \right)  \right \}  $ and consider its closure
$\overline{\mathcal{U}}$ under the compact open topology.

Set
\[
\overline{\varepsilon}=\sup \left \{  \varepsilon \left(  h\right)
|h\in \overline{\mathcal{U}}\right \}  .
\]
By Lemma \ref{epsilon}, $\overline{\varepsilon}\geq \varepsilon.$ By
Proposition \ref{u_ubar} we may choose $h_{0}\in \overline{\mathcal{U}%
}\setminus \mathcal{U}$ such that $\varepsilon \left(  h_{0}\right)
>\overline{\varepsilon}-\frac{\varepsilon}{3}$ and let $\left \{
h_{n}\right \}  _{n\in \mathbb{N}}$ be a sequence in $\mathcal{U}$ such that
$h_{n}=t_{n}\cdot g\rightarrow h_{0}.$

Pick a lift $\widetilde{h}_{0}$ of $h_{0}$ and a sequence $\left \{
\widetilde{h}_{n}\right \}  $ approximating $\widetilde{h}_{0}$ in the compact
open topology. We first claim that for all $n\in \mathbb{N}$ large enough the
geodesic segment $\widetilde{h}_{n}\left(  \left[  \overline{\varepsilon
},T\right]  \right)  ,$ for any time $T>\overline{\varepsilon},$ cannot be
parallel to the geodesic line $\operatorname{Im}\widetilde{h}_{0}.$ For, if
$\widetilde{h}_{k}\left(  \left[  \overline{\varepsilon},T\right]  \right)  $
is parallel to the geodesic line $\operatorname{Im}\widetilde{h}_{0},$ then
$\operatorname{Im}\widetilde{h}_{0}$ would have to be parallel to the whole
geodesic line $\operatorname{Im}\widetilde{h}_{k}$ at distance, say $M\geq0.$
The same will also be true for any translation $t\cdot \widetilde{h}_{k}.$ If
$M=0$ then $\widetilde{h}_{k}$ is a translation of $\widetilde{h}_{0}$ which
implies that $h_{0}\in \mathcal{U}$, is a contradiction since $h_{0}$ was
chosen in $\overline{\mathcal{U}}\setminus \mathcal{U}$. In the case $M>0,$ as
$h_{n},$ $n>k$ is a translation of $h_{k},$ namely $h_{n}=\left(  t_{n}%
-t_{k}\right)  \cdot h_{k},$ we would have that $h_{n}\left(  0\right)  =$
$h_{k}\left(  t_{n}-t_{k}\right)  $ is at distance at least $M>0$ from
$h_{0}\left(  0\right)  .$ This is a contradiction to the fact that
$h_{n}=t_{n}\cdot g\rightarrow h_{0}.$

Pick a sequence $z_{k}\rightarrow \infty$ and for each $k\in \mathbb{N},$ let
$N\left(  k\right)  \in \mathbb{N}$ be large enough so that for the time
interval $\left[  \overline{\varepsilon},z_{k}\right]  $ the geodesic segment
$\widetilde{h}_{N\left(  k\right)  }\left(  \left[  \overline{\varepsilon
},z_{k}\right]  \right)  $ lies inside the flat half strip $FHS\left(
\widetilde{h}_{0}\right)  .$ As the geodesic segment $\widetilde{h}_{N\left(
k\right)  }\left(  \left[  \overline{\varepsilon},z_{k}\right]  \right)  $ is
not parallel to the geodesic line $\operatorname{Im}\widetilde{h}_{0},$ we may
apply Lemma \ref{euclidean} to the flat strips $FHS\left(  \widetilde{h}%
_{0}\right)  $ and $FHS\left(  \widetilde{h}_{N\left(  k\right)  }\right)
,k\in \mathbb{N}$ to obtain sequences
\[
\left \{  r_{k}\right \}  _{k\in \mathbb{N}},\left \{  q_{k}\right \}
_{k\in \mathbb{N}}\subset \left[  0,\infty \right)
\]
such that

\begin{itemize}
\item $q_{k}\rightarrow \infty,$ and

\item the geodesic segment $r_{k}\cdot \widetilde{h}_{0}\left(  \left[
0,q_{k}\right]  \right)  $ is contained in a right angle quadrangle of width
$\varepsilon \left(  \widetilde{h}_{0}\right)  +\frac{\varepsilon}{2}$ and
length $q_{k}.$
\end{itemize}

The sequence $\left \{  p_{S}\left(  r_{k}\cdot \widetilde{h}_{0}\right)
\right \}  $ converges, up to a subsequence, to a geodesic $f\in \overline
{\mathcal{U}}.$ By Lemma \ref{epsilon} and Remark \ref{euclidean_remark}, it
follows that $f\in FHS\left(  \varepsilon \left(  \widetilde{h}_{0}\right)
+\frac{\varepsilon}{2}\right)  $ which is a contradiction because
\[
\varepsilon \left(  \widetilde{h}_{0}\right)  +\frac{\varepsilon}%
{2}=\varepsilon \left(  h_{0}\right)  +\frac{\varepsilon}{2}>\varepsilon \left(
h_{0}\right)  +\frac{\varepsilon}{3}>\overline{\varepsilon}%
\]
and $\overline{\varepsilon}$ is chosen to be $\sup \left \{  \varepsilon \left(
h\right)  |h\in \overline{\mathcal{U}}\right \}  .$ \hfill \rule{0.5em}{0.5em}

\begin{remark}
As the image of any geodesic in $\widetilde{S}$ splits $\widetilde{S}$ into
two convex sets, say $\widetilde{S}_{L}$ and $\widetilde{S}_{R}$, we may speak
of the distance to the left of $\operatorname{Im}\widetilde{g}$ from the set
$\left \{  \widetilde{s}\in \widetilde{S}_{L}\bigm \vert \widetilde
{s}\mathrm{\mathrm{\ conical\mathrm{\mathrm{\ point}}}}\right \}  ,$ namely,
\[
d_{L}=\inf \left \{  d\left(  x,\widetilde{s}\right)  \bigm \vert x\in
\operatorname{Im}\widetilde{g},\widetilde{s}\in \widetilde{S}_{L}\right \}
\]
and similarly for $d_{R}.$ For a geodesic $\widetilde{g}$ in $\widetilde{S}$
which\newline(a) does not contain any conical point\newline(b) projects to a
non-closed geodesic in $S$\newline Theorem \ref{no_strips} asserts that
$d_{L}=d_{R}=0.$ This is clear in the above proof by working with a maximal
flat strip to the left (right) of $\operatorname{Im}g.$
\end{remark}

Using the uniqueness of the non-closed geodesic lines in $\widetilde{S}$ (see
Theorem \ref{no_strips}), one can obtain a short proof of the following
theorem shown in \cite{[Bal]} in the context of Hadamard spaces.

\begin{theorem}
\label{orbit}There exists a geodesic $\gamma$ in $GS$ whose orbit
$\mathbb{R}\gamma$ under the geodesic flow is dense in $GS.$
\end{theorem}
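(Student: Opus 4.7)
The plan is to produce a geodesic $\gamma\in GS$ whose lift $\widetilde{\gamma}$ has a pair of endpoints in $\partial_{vis}\widetilde{S}$ with dense $\Gamma$-orbit, and then to combine the uniqueness statement for non-closed geodesic lines granted by Theorem \ref{no_strips} with density on the boundary to show that every $h\in GS$ lies in the closure of the flow orbit of $\gamma$.

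Step 1 (choice of $\gamma$). Proposition \ref{dense2} gives a dense $\Gamma$-orbit in $\partial_{vis}\widetilde{S}\times \partial_{vis}\widetilde{S}$; a standard Baire category argument upgrades this to residuality of the set of pairs with dense $\Gamma$-orbit. Since the set of pairs $(\phi(-\infty),\phi(+\infty))$ with $\phi\in\Gamma$ hyperbolic is a countable $\Gamma$-invariant subset of $\partial_{vis}\widetilde{S}\times\partial_{vis}\widetilde{S}$, one may select $(\eta,\xi)$ with dense $\Gamma$-orbit that is not a pair of fixed points of any hyperbolic element. By Proposition \ref{existence rays, lnes} there is a geodesic line $\widetilde{\gamma}$ joining $\eta$ and $\xi$; by Lemma \ref{existence closed geodesic} together with Theorem \ref{no_strips}, this line is non-closed and is the unique line with these endpoints. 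Set $\gamma=p_S\circ\widetilde{\gamma}\in GS$.

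Step 2 (approximating non-closed geodesics). Fix a non-closed $h\in GS$, a lift $\widetilde{h}$ with endpoints $(\mu,\nu)$, and a compact window $[-T,T]$. Using density of the $\Gamma$-orbit of $(\eta,\xi)$, pick $\phi_n\in\Gamma$ with $(\phi_n\eta,\phi_n\xi)\to(\mu,\nu)$. For each $n$, let $t_n\in\mathbb{R}$ be such that $\phi_n\widetilde{\gamma}(t_n)$ is the point of $\phi_n\widetilde{\gamma}$ closest to $\widetilde{h}(0)$. Properness of $\widetilde{S}$ and Arzelà--Ascoli yield a subsequence of $\phi_n\widetilde{\gamma}(\cdot+t_n)$ that converges uniformly on compact sets to a geodesic line with endpoints $(\mu,\nu)$; by the uniqueness granted by Theorem \ref{no_strips} this limit must be $\widetilde{h}$, and a subsequence-of-subsequence argument promotes convergence to the full sequence. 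Since $p_S\circ\phi_n\widetilde{\gamma}=\gamma$, applying $p_S$ gives $t_n\cdot\gamma\to h$ in $GS$.

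Step 3 (closed geodesics and conclusion). Every closed $h\in GS$ can be approximated in $GS$ by non-closed geodesics: choose a regular base point on $\operatorname{Im}\widetilde{h}$ and perturb the direction $\widetilde{h}'(0)$; since the set of directions producing a closed geodesic is countable, generic perturbations produce non-closed $h_k$, and by the CAT(0) continuity of geodesics with respect to initial data one has $h_k\to h$ in $GS$. Combined with Step 2, this shows $\overline{\mathbb{R}\gamma}$ is closed, contains all non-closed geodesics, and hence equals $GS$.

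The main obstacle is the upgrade in Step 2 from boundary convergence $(\phi_n\eta,\phi_n\xi)\to(\mu,\nu)$ to uniform convergence on compacts of the reparametrized lines $\phi_n\widetilde{\gamma}(\cdot+t_n)$. Without the uniqueness of Theorem \ref{no_strips}, the Arzelà--Ascoli limit could be any line in a flat strip sharing the endpoints $(\mu,\nu)$, and the identification with $\widetilde{h}$ would fail; it is precisely the exclusion of such flat strips for non-closed geodesics that makes the approximation work and pins down the full sequence's limit.
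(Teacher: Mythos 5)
The paper does not actually write out a proof of this theorem; it only remarks that a short proof follows from the uniqueness of non-closed geodesic lines (Theorem \ref{no_strips}) and refers to Ballmann, so there is no argument of the authors to compare yours against line by line. Your Steps 1 and 2 are the standard argument and are essentially sound: choosing $(\eta,\xi)$ with dense $\Gamma$-orbit outside the countable (hence meager) set of hyperbolic fixed-point pairs, and using Theorem \ref{no_strips} together with the flat strip theorem to identify the Arzel\`a--Ascoli limit with $\widetilde{h}$, is exactly the role the authors intend for Theorem \ref{no_strips}. (You do assert without justification that $d\bigl(\widetilde{h}(0),\phi_n\widetilde{\gamma}\bigr)$ stays bounded; this needs the visibility/hyperbolicity of $\widetilde{S}$, not merely properness, but it is standard.)

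Step 3 contains a genuine gap. On a cone surface a direction at a point does not determine a unique complete geodesic (the continuation is ambiguous at every cone point of angle $>2\pi$ that the geodesic meets), so ``the set of directions producing a closed geodesic is countable'' is not well posed, and ``CAT(0) continuity of geodesics with respect to initial data'' is not a valid principle here --- its failure at cone points is precisely the phenomenon the paper's Lemma \ref{doublep} describes. Concretely, let $h$ be a closed geodesic passing through a cone point $s$ at which both angles are $>\pi$ (nothing excludes this). By Lemma \ref{doublep} every geodesic that is $C^{0}$-close to $h$ near that time must itself pass through $s$, while a geodesic passing strictly on the side of $s$ where the angle is $\alpha>\pi$ is deflected there by the fixed amount $\alpha-\pi$ no matter how small the initial perturbation; hence no small change of direction at a regular point of $h$ yields geodesics converging to $h$ beyond the first such cone point, and your reduction of closed geodesics to non-closed ones collapses. (This issue also constrains the theorem itself: unless $\gamma$ passes through such cone points --- something your Step 1 does not control --- no translate of $\gamma$ can approximate such an $h$; fortunately Step 2 applied to a suitable auxiliary geodesic forces this.) A repair: if $h$ avoids the cone points it lies in the interior of a flat cylinder and is approximated by translates $\phi_n\widetilde{\gamma}$ whose endpoint pairs link $\bigl(\widetilde{h}(-\infty),\widetilde{h}(+\infty)\bigr)$, so that they cross the lifted strip at angles tending to $0$; if $h$ contains cone points, branch off $h$ at such a point on a side where the angle exceeds $\pi$ to build a non-closed geodesic $h'$ agreeing with $h$ on a half-line, apply your Step 2 to $h'$, and use the periodicity of $h$ to translate the resulting approximation window back over any prescribed $[-T_0,T_0]$.
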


\section{Density of closed geodesics}

In this section we show the following

\begin{theorem}
\label{dense} The closed geodesics are dense in $GS.$
\end{theorem}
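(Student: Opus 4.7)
The plan is to produce closed geodesics from axes of hyperbolic elements in $\Gamma$ and to use the uniqueness of non-closed geodesics (Theorem \ref{no_strips}) to identify the limits. First I reduce to the case of a non-closed $\gamma \in GS$: if $\gamma$ itself is closed, it trivially lies in the closure of the set of closed geodesics. So fix a lift $\widetilde{\gamma}$ of a non-closed $\gamma$, with visual endpoints $\xi = \widetilde{\gamma}(-\infty)$ and $\eta = \widetilde{\gamma}(+\infty)$. By Theorem \ref{no_strips} we have $\varepsilon(\widetilde{\gamma})=0$, so $\widetilde{\gamma}$ is the unique geodesic line of $\widetilde{S}$ with those endpoints; in particular $(\xi,\eta)$ is not the pair of fixed points of any hyperbolic element of $\Gamma$.

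Next, by Proposition \ref{dense3} I select a sequence of hyperbolic elements $\phi_n \in \Gamma$ with $(\phi_n(-\infty),\phi_n(+\infty))\to (\xi,\eta)$ in $\partial_{vis}\widetilde{S}\times \partial_{vis}\widetilde{S}$. For each $n$ I pick an axis $c_n$ of $\phi_n$ oriented from $\phi_n(-\infty)$ to $\phi_n(+\infty)$; by Lemma \ref{existence closed geodesic} the projection $g_n:=p_S\circ c_n$ is a closed geodesic in $S$. To obtain $g_n\to \gamma$ in the compact-open topology of $GS$, I fix the time parameterization of each $c_n$ by declaring $c_n(0)$ to be the orthogonal projection of $\widetilde{\gamma}(0)$ onto the closed convex subset $\operatorname{Im}(c_n)\subset \widetilde{S}$, which is well-defined since $\widetilde{S}$ is a Hadamard space.

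With these normalizations in hand, properness of $\widetilde{S}$ together with the Arzela--Ascoli theorem extracts, from any subsequence of $\{c_n\}$, a further subsequence converging uniformly on compact time intervals to some $c_\infty \in G\widetilde{S}$. Convergence of the endpoints $c_n(\pm\infty)\to (\xi,\eta)$ forces $c_\infty$ to join $\xi$ to $\eta$, so by uniqueness $c_\infty$ coincides with $\widetilde{\gamma}$ as a set; the projection-based normalization forces $c_\infty(0)=\widetilde{\gamma}(0)$, hence $c_\infty=\widetilde{\gamma}$ as a parameterized map. Since every subsequence admits a further subsequence with the same limit, the whole sequence $c_n$ converges to $\widetilde{\gamma}$, and projecting by $p_S$ yields $g_n\to \gamma$ in $GS$.

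The main obstacle is converting convergence of boundary data into convergence of parameterized maps. Uniqueness from Theorem \ref{no_strips} is what prevents the axes from drifting into a parallel geodesic across a flat strip of positive width, while the orthogonal-projection normalization is what synchronizes the time origins so that compact-open convergence of $g_n$ to $\gamma$, rather than convergence of mere images up to reparameterization, actually holds.
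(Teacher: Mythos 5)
Your proof is correct in outline and shares the paper's skeleton --- reduce to a non-closed $\gamma$, use Proposition \ref{dense3} to approximate the endpoint pair $(\xi,\eta)$ by fixed-point pairs of hyperbolic elements, project axes to closed geodesics via Lemma \ref{existence closed geodesic}, and use the uniqueness supplied by Theorem \ref{no_strips} to identify the limit --- but the mechanism by which you convert convergence of boundary pairs into compact-open convergence of parameterized geodesics is genuinely different. The paper invokes the homeomorphism $H\colon G_{0}\widetilde{S}\rightarrow\partial^{2}\widetilde{S}\times\mathbb{R}$ of Champetier, which packages in one citation both the continuity (endpoint pair plus a time-normalizing real parameter determine the geodesic continuously) and the synchronization of origins, and it must then separately handle approximating classes that are flat strips; you instead run Arzel\`a--Ascoli on the axes $c_{n}$ normalized by orthogonal projection of $\widetilde{\gamma}(0)$, and use uniqueness of the geodesic joining $\xi$ to $\eta$ to pin down every subsequential limit, which also lets you dispense with the paper's side condition that $\eta_{n},\xi_{n}$ lie in one component of $\partial\widetilde{S}\setminus\{\eta,\xi\}$. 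The price is that two facts are asserted without proof: (i) $d\left(\widetilde{\gamma}(0),\operatorname{Im}c_{n}\right)$ is uniformly bounded --- without this the normalized points $c_{n}(0)$ need not stay in a compact set and Arzel\`a--Ascoli does not apply; and (ii) a compact-open limit of the $c_{n}$ still has endpoints $(\xi,\eta)$. Both are standard for a proper Gromov-hyperbolic (equivalently here, cocompact CAT(0)) space --- geodesics whose endpoints converge to a pair of distinct boundary points eventually pass at bounded distance from any fixed geodesic joining that pair --- and they are precisely the content the paper outsources to $H$; you should cite or prove such a statement, as it is the only non-formal step in your argument. One further small point: uniqueness of the geodesic from $\xi$ to $\eta$ requires that $\widetilde{\gamma}$ not be a boundary line of a positive-width flat strip, which is the one-sided refinement $d_{L}=d_{R}=0$ recorded in the Remark following Theorem \ref{no_strips} rather than the theorem's literal statement; since the paper itself reads its theorem this way, this is cosmetic.
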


Set $\partial^{2}\widetilde{S}=\{(\xi,\eta)\in \partial \widetilde{S}%
\times \partial \widetilde{S}:\xi \neq \eta \}$ and let $H:G_{0}\widetilde
{S}\rightarrow \partial^{2}\widetilde{S}\times \mathbb{R}$ be the homeomorphism
given as follows: choose a base point $x_{0}\in \widetilde{S}.$ Let $\gamma
_{E}\in G_{0}\widetilde{S}.$ If the class $\gamma_{E}$ consists of a single
geodesic line $\gamma$ joining $\gamma \left(  +\infty \right)  ,$
$\gamma \left(  +\infty \right)  \in \partial \widetilde{S}$ set%

\[
H(\gamma_{E})=(\gamma(-\infty),\gamma(\infty),s)\text{ \qquad(1)}%
\]

\noindent where $s$ is the real number such that $d(x_{0},\gamma
(\mathbb{R}))=d(x_{0},\gamma(-s)).$ If $\gamma_{E}$ is a class of parallel
geodesics $\left \{  \gamma^{r}|r\in \left[  0,\varepsilon \right]  \right \}  $
forming a flat strip $E,$ then there exist a unique geodesic segment starting
from $x_{0}$ and realizing the distance of $x_{0}$ from $E.$ We may extend
this geodesic segment to a geodesic segment which is perpendicular to all
lines $\operatorname{Im}\gamma^{r},$ $r\in \left[  0,\varepsilon \right]  .$ In
other words, there exists a unique $s\in \mathbb{R}$ such that
\[
d\left(  x_{0},\gamma^{r}(\mathbb{R})\right)  =d(x_{0},\gamma^{r}(-s)),\forall
r\in \left[  0,\varepsilon \right]  .
\]
We use this $s\in \mathbb{R}$ to define $H(\gamma_{E})$ as in equation (1). The
map $H$ is a homeomorphism (see \cite[Th. 4.8]{[Cha]}).\\[2mm]\noindent
\textbf{Proof of Theorem \ref{dense}} Let $\beta \in GS$ be a non-closed
geodesic and $\widetilde{\beta}\in G\widetilde{S}$ a lifting. Set
$\eta=\widetilde{\beta}(-\infty),$ $\xi=\widetilde{\beta}(+\infty).$ Then by
Proposition \ref{dense3} there exists a sequence $\left \{  \phi_{n}\right \}  $
of hyperbolic isometries such that $\phi_{n}\left(  -\infty \right)
\rightarrow \eta$ and $\phi_{n}\left(  +\infty \right)  \rightarrow \xi.$ Set
$\eta_{n}\equiv$ $\phi_{n}\left(  -\infty \right)  $ and $\xi_{n}\equiv \phi
_{n}\left(  +\infty \right)  $ and, clearly, $\phi_{n}(\eta_{n})=\eta_{n},$
$\phi_{n}(\xi_{n})=\xi_{n}.$ Since $\partial \widetilde{S}$ is topologically a
circle, we may choose the sequence $\{(\eta_{n},\xi_{n})\} \subset
\partial_{vis}\widetilde{S}\times \partial_{vis}\widetilde{S}$ so that, in
addition, for all $n$ the following condition holds:

\begin{quote}
$\eta_{n},\xi_{n}$ belong to the same component of $\partial \widetilde
{S}\setminus \left \{  \eta,\xi \right \}  .$
\end{quote}

As $\beta$ is non-closed, the chosen lift $\widetilde{\beta}\in G\widetilde
{S}$ determines a class $\widetilde{\beta}_{E}\in G_{0}\widetilde{S}$ which is
a singleton. We have $H(\widetilde{\beta}_{E})=(\eta,\xi,s)$ for some
$s\in \mathbb{R}$. Denote by $\widetilde{\beta_{n}}_{E}$ the geodesic
$H^{-1}(\eta_{n},\xi_{n},s_{n})\in G_{0}\widetilde{S}$ where $s_{n}$ is chosen
so that $s_{n}\rightarrow s.$ As $H$ is a homeomorphism, $\widetilde{\beta
_{n}}_{E}\rightarrow \widetilde{\beta}_{E}$ in the compact-open topology of
$G_{0}\widetilde{S}$. If the class $\widetilde{\beta_{n}}_{E}$ is not a
singleton, choose $\widetilde{\beta_{n}}$ to be $\widetilde{\beta_{n}}^{r}$
where $r=0$ or $1$ according to which of the two geodesic lines
$\operatorname{Im}\widetilde{\beta_{n}}^{0},$ $\operatorname{Im}%
\widetilde{\beta_{n}}^{1}$ has smaller distance (as a set) from
$\operatorname{Im}\widetilde{\beta_{n}}.$ We then have that the sequence
$\left \{  \widetilde{\beta_{n}}_{E}\right \}  $ determines a sequence $\left \{
\widetilde{\beta_{n}}\right \}  $ such that $\widetilde{\beta_{n}}%
\rightarrow \widetilde{\beta}$ in the compact-open topology of $G\widetilde{S}%
$. Clearly, each $\widetilde{\beta_{n}}$ is translated by $\phi_{n}$ and,
thus, projects to a closed geodesic $\beta_{n}=\pi_{S}(\widetilde{\beta_{n}})$
in $S.$ By continuity of $\pi_{S},$ we have the desired convergence $\beta
_{n}\rightarrow \beta.$ \hfill \rule{0.5em}{0.5em}

\section{Application to surfaces with conical singularities $<2\pi.$}

From now on we consider surfaces with conical singularities of arbitrary angle
$\theta \left(  s\right)  \in \left(  0,\infty \right)  \setminus \left \{
2\pi \right \}  .$ We will show that any geodesic in $S$ can be approximated, in
the compact open topology, by closed geodesics and/or appropriate saddle connections.

In what follows we will need a certain existence theorem of branched coverings
which follows from Theorem 2.1 in \cite{[PP]}. The latter is the result of
efforts of many mathematicians. Originally, the classical problem dating back
to Hurewitz asks necessary and sufficient conditions for the existence of
branched covering (see in \cite{[PP]} and its bibliography for supplementary
information). We next recall basic definitions.

A branched covering is a map $\psi:S^{\prime}\rightarrow S$, where $S^{\prime
}$ and $S$ are closed connected surfaces and $\psi$ is locally modeled on maps
of the form $\mathbb{C}\ni z \rightarrow z^{k} \in \mathbb{C}$ for some $k
\geq1.$ The integer $k$ is called the local degree at the point of $S^{\prime
}$ corresponding to $0$ in the source $\mathbb{C}.$ If $k > 1$ then the point
of $S$ corresponding to $0$ in the target $\mathbb{C}$ is called a branching
point. The branching points are isolated, hence there are finitely many, say
$n,$ of them. Removing the branching points in $S$ and all their pre-images in
$S^{\prime}$, the restriction of $\psi$ gives a genuine covering, whose degree
we will denote by $d.$ If the i-th branching point on $S$ has $m_{i}$
pre-images, the local degrees $(d_{ij})_{j=1,\ldots.m_{i}}$ at these points
give a partition of $d$ i.e., $d_{ij}\geq1$ and $\sum_{j=1}^{m_{i}} d_{ij}=d.$

In our context, we consider only the conical points $s_{1},\ldots,s_{l},$
$l\leq n$ in $S$ which are conical singularities of angle smaller than $2\pi$
and choose

\begin{itemize}
\item $m_{i}=1$ for all $i=1,\ldots,n$

\item $d_{i1}=d$ odd natural number so that $d_{i1}\theta(s_{i})>2\pi.$
\end{itemize}

The above branching data is realizable by a branched covering (see Theorem 2.1
in \cite{[PP]}) yielding the following theorem

\begin{theorem}
\label{branched covering}For every $e.s.c.s.$ $S$ of genus $\geq0,$ there a
(finite) branched covering $\psi:S^{\prime}\rightarrow S$ such that:\newline%
(1) $S^{\prime}$ is an $e.s.c.s.,$ with all conical singularities being of
angle larger than $2\pi,$ \newline(2) the branch set on $S$ are the conical
singularities of angle smaller than $2\pi.$\newline In particular, each
conical singularity of angle smaller than $2\pi$ has one pre-image in
$S^{\prime}.$
\end{theorem}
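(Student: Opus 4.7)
The plan is to apply Theorem 2.1 of \cite{[PP]} with the explicit branching data described in the paragraph immediately preceding the theorem, and then transfer the Euclidean structure from $S$ to $S'$ by pulling back along the resulting branched covering $\psi$.

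First, I would enumerate the conical points of $S$ of angle smaller than $2\pi$ as $s_{1},\ldots,s_{l}$ and fix an odd integer $d$ large enough that $d\cdot\theta(s_{i})>2\pi$ for every $i=1,\ldots,l$; this is possible because only finitely many conical singularities are involved. The prescribed branching data is then: branch set $\{s_{1},\ldots,s_{l}\}$, with one preimage over each $s_{i}$ of local degree $d_{i1}=d$. Each partition trivially sums to $d$, and Riemann--Hurwitz gives
\[
\chi(S')=d\,\chi(S)-l(d-1),
\]
which is even because $d-1$ is even. Invoking Theorem 2.1 of \cite{[PP]} produces a branched covering $\psi:S'\to S$ realizing this data.

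Next, I would pull back the length metric of $S$ via $\psi$ and check locally that the result is an $e.s.c.s.$ with all cone angles greater than $2\pi$. Away from the branch locus, $\psi$ is a local homeomorphism, so preimages of regular points of $S$ are regular in $S'$, and each preimage of a non-branching conical singularity $s_{j}$ of angle $\theta(s_{j})>2\pi$ is isometric to a neighborhood of a cone of the same angle $\theta(s_{j})>2\pi$. At the single preimage of a branching conical point $s_{i}$ with $\theta(s_{i})<2\pi$, the covering is locally modeled on $z\mapsto z^{d}$; pulling the standard cone metric of angle $\theta(s_{i})$ through such a map multiplies the cone angle by $d$, yielding a cone of angle $d\cdot\theta(s_{i})>2\pi$ by the choice of $d$. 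Hence every conical singularity of $S'$ has angle strictly larger than $2\pi$, establishing (1); the assertion in (2) is built into the construction, since the branch set was chosen to be exactly the conical points of $S$ of angle smaller than $2\pi$, each with exactly one preimage.

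The main obstacle is the realizability of the prescribed branching data, and that is handled in its entirety by \cite[Theorem 2.1]{[PP]}. The role of the parity of $d$ is precisely to guarantee the Hurwitz-type compatibility conditions (in particular the evenness of $\chi(S')$); the role of its size is to ensure $d\cdot\theta(s_{i})>2\pi$. Once the covering exists, verifying that the pulled-back metric realizes the desired cone-angle structure is a local computation about the standard branch maps $z\mapsto z^{k}$.
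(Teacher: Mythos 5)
Your proposal is correct and takes essentially the same route as the paper: the paper likewise prescribes branch set $\{s_{1},\ldots,s_{l}\}$ (the cone points of angle smaller than $2\pi$), one preimage over each with odd local degree $d$ chosen so that $d\,\theta(s_{i})>2\pi$, and invokes Theorem 2.1 of \cite{[PP]} for realizability. Your Riemann--Hurwitz parity check and the local verification that pulling back through $z\mapsto z^{d}$ multiplies the cone angle by $d$ are details the paper leaves implicit.
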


Let $S$ be a $e.s.c.s.$ with conical singularities $\left \{  s_{1}%
,\ldots,s_{l},s_{l+1},\ldots,s_{n}\right \}  $ with $\theta \left(
s_{i}\right)  \in \left(  0,2\pi \right)  $ for $i=1,\ldots,l$ and
$\theta \left(  s_{i}\right)  \in \left(  2\pi,\infty \right)  $ for
$i=l+1,\ldots,n,$ $l\leq n.$ Observe that the image of a geodesic in such
surfaces, cannot contain a singular point $s$ with $\theta \left(  s\right)
\in \left(  0,2\pi \right)  .$

We consider the following notion in $S:$ a local geodesic segment in $S$ with
endpoints in $\left \{  s_{1},\ldots,s_{l}\right \}  $ will be called a
\emph{generalized saddle connection}. This terminology follows the notion of a
saddle connection originally introduced in the study of translation surfaces
(see \cite{Zor}). Observe that a generalized saddle connection can only
contain in its interior conical points $s_{i},$ for $i=l+1,\ldots,n.$ We also
allow the endpoints to coincide. We will also use the notion of a \emph{closed
piece-wise geodesic} by which we mean a finite union of generalized saddle
connections which form a closed curve. Clearly, a closed piece-wise geodesic
is a closed curve $\gamma$ which is a local geodesic except at the points
conical points $\operatorname{Im}\gamma \cap$ $\left \{  s_{1},\ldots
,s_{l}\right \}  .$

\begin{theorem}
\label{dense_small}Let $S$ be a Euclidean surface with conical singularities
\[
\left \{  s_{1},\ldots,s_{l},s_{l+1},\ldots,s_{n}\right \}
\]
with $\theta \left(  s_{i}\right)  \in \left(  0,2\pi \right)  $ for
$i=1,\ldots,l$ and $\theta \left(  s_{i}\right)  \in \left(  2\pi,\infty \right)
$ for $i=l+1,\ldots,n,$ $l\leq n.$ Then,\newline(a) for any non-closed
geodesic or geodesic ray $g$ in $S,$
\[
d\left(  \operatorname{Im}g,\left \{  s_{1},\ldots,s_{l},s_{l+1},\ldots
,s_{n}\right \}  \right)  =0.
\]
(b) every element in $GS$ can be approximated, in the compact open topology,
either by a sequence of closed geodesics or, by a sequence of closed
piece-wise geodesics.
\end{theorem}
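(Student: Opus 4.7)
The plan is to reduce both statements to the case already treated in the paper (all cone angles $>2\pi$) by passing to the branched covering $\psi:S^{\prime}\to S$ of Theorem \ref{branched covering}. Since $S^{\prime}$ is an $e.s.c.s.$ with all conical points of angle $>2\pi$, the earlier results Theorems \ref{no_strips} and \ref{dense} apply directly on $S^{\prime}$, and it only remains to lift geodesics from $S$ to $S^{\prime}$ and to push the conclusions back down via $\psi$.

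The first (routine) ingredient is a lifting lemma. As noted in the paper, the image of any geodesic in $S$ cannot contain a conical point of angle $<2\pi$, so a geodesic $g:\mathbb{R}\to S$ avoids the branch locus $\{s_{1},\ldots,s_{l}\}$. Restricted to $S^{\prime}\setminus \psi^{-1}(\{s_{1},\ldots,s_{l}\})$, the map $\psi$ is a genuine finite covering which is a local isometry, so $g$ lifts to a local isometry $g^{\prime}:\mathbb{R}\to S^{\prime}$ with $\psi\circ g^{\prime}=g$. The preimages of $s_{l+1},\ldots,s_{n}$ (where $\psi$ is unbranched and locally isometric) remain cone points of the same angle $>2\pi$ in $S^{\prime}$, so $g^{\prime}$ is genuinely a local geodesic of $S^{\prime}$ through such preimages as well. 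A short argument using the finiteness of the cover and the continuity of $\psi$ shows that $g^{\prime}$ is closed if and only if $g$ is closed. Moreover, the conical points of $S^{\prime}$ are exactly the preimages of the conical points of $S$, and hence $\psi$ sends the singular set of $S^{\prime}$ into the singular set of $S$.

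For part (a), suppose $g$ is non-closed; then its lift $g^{\prime}$ is non-closed in $S^{\prime}$. By Theorem \ref{no_strips} applied to $S^{\prime}$, there is a sequence of points $p_{k}\in\operatorname{Im}g^{\prime}$ converging to some conical point $\widetilde{s}\in S^{\prime}$. Applying $\psi$ and using its continuity, $\psi(p_{k})\in\operatorname{Im}g$ converges to $\psi(\widetilde{s})$, which by the previous paragraph belongs to $\{s_{1},\ldots,s_{n}\}$. This yields $d(\operatorname{Im}g,\{s_{1},\ldots,s_{n}\})=0$. For part (b), take $g\in GS$; we may assume $g$ is non-closed, lift to $g^{\prime}\in GS^{\prime}$, and apply Theorem \ref{dense} on $S^{\prime}$ to obtain a sequence of closed geodesics $g_{n}^{\prime}$ with $g_{n}^{\prime}\to g^{\prime}$ in the compact-open topology. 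Since $\psi$ is continuous, the projected closed curves $\psi\circ g_{n}^{\prime}$ converge to $g$ in the compact-open topology on $GS$. Each $\psi\circ g_{n}^{\prime}$ is a local geodesic of $S$ away from the branch points it hits, because $\psi$ is locally isometric off the branch locus. If $g_{n}^{\prime}$ misses $\psi^{-1}(\{s_{1},\ldots,s_{l}\})$, then $\psi\circ g_{n}^{\prime}$ is a closed geodesic in $S$. Otherwise, at every point of $\operatorname{Im}(\psi\circ g_{n}^{\prime})\cap\{s_{1},\ldots,s_{l}\}$ the cone angle is $<2\pi$, so no local geodesic can traverse it; hence $\psi\circ g_{n}^{\prime}$ has corners exactly at these points and is a finite concatenation of generalized saddle connections, i.e.\ a closed piece-wise geodesic. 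Passing to the subsequence corresponding to whichever of the two alternatives occurs infinitely often gives the desired approximating sequence.

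The main obstacle is the lifting step and the accompanying bookkeeping: one must verify that geodesics of $S$ pull back to genuine local geodesics of $S^{\prime}$, that closedness is preserved in both directions, and that the conical singularities of $S^{\prime}$ are precisely the preimages of those of $S$ with the angles predicted by Theorem \ref{branched covering}. Once this is set up, parts (a) and (b) follow almost mechanically from Theorems \ref{no_strips} and \ref{dense} together with the continuity of $\psi$; the only genuinely geometric observation beyond the covering machinery is that a cone point of angle $<2\pi$ cannot lie on the interior of any local geodesic, which is what forces $\psi\circ g_{n}^{\prime}$ to split into generalized saddle connections whenever it meets the branch locus.
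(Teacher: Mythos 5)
Your proposal is correct and follows essentially the same route as the paper: reduce to the branched covering $\psi:S^{\prime}\rightarrow S$ of Theorem \ref{branched covering}, lift $g$ to a geodesic $g^{\prime}$ of $S^{\prime}$ (possible since $\operatorname{Im}g$ avoids the branch locus), apply Theorems \ref{no_strips} and \ref{dense} upstairs, and push the conclusions down by $\psi$, splitting into the two cases according to whether the approximating closed geodesics meet $\psi^{-1}(\{s_{1},\ldots,s_{l}\})$. The only cosmetic difference is that you argue part (a) directly via continuity of $\psi$ while the paper argues by contradiction using that $\psi$ is distance decreasing, and you spell out the lifting bookkeeping that the paper treats as immediate.
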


\begin{proof}
Let $S^{\prime}$ be the $e.s.c.s.$ and $\psi:S^{\prime}\rightarrow S$ the
branched covering posited by the above theorem. The branched set of $\psi$ is
$\left \{  s_{1},\ldots,s_{l}\right \}  $ and denote by $\left \{  s_{1}^{\prime
},\ldots,s_{l}^{\prime}\right \}  $ their pre-images in $S^{\prime}.$

Let $g$ be a non-closed geodesic (or geodesic ray). Observe that
$\operatorname{Im}g$ does not contain any of the singularities $s_{1}%
,\ldots,s_{l}.$ Clearly, $g$ is covered by finitely many geodesics in
$GS^{\prime}.$ Let $g^{\prime}$ be one of them.

For (a), if
\[
d\left(  \operatorname{Im}g,\left \{  s_{1},\ldots,s_{l}, s_{l+1},\ldots,s_{n}
\right \}  \right)  >0
\]
then, since $\psi$ is distance decreasing, we have
\[
d\left(  \operatorname{Im}g^{\prime},\left \{  s_{1}^{\prime},\ldots
,s_{l}^{\prime}, s_{l+1}^{\prime},\ldots,s_{n}^{\prime} \right \}  \right)
>0.
\]
This is a contradiction because Theorem \ref{no_strips} asserts that
\[
d\left(  \operatorname{Im}g^{\prime},\left \{  s_{1}^{\prime},\ldots
,s_{l}^{\prime},s_{l+1}^{\prime},\ldots,s_{n}^{\prime}\right \}  \right)  =0
\]
For (b), Theorem \ref{dense} asserts that there exists a sequence of closed
geodesics $\left \{  g_{n}^{\prime}\right \}  $ converging, in the compact open
topology, to $g^{\prime}.$ If
\begin{equation}
\operatorname{Im}g_{n}^{\prime}\cap \left \{  s_{1}^{\prime},\ldots
,s_{l}^{\prime}\right \}  =\varnothing \label{empty_tonos}%
\end{equation}
holds for infinitely many $n,$ then by passing, if necessary, to a subsequence
we have that $\left \{  g_{n}^{\prime}\right \}  $ converges to $g^{\prime}$
and, therefore, the sequence of closed geodesics $\left \{  \psi \left(
g_{n}^{\prime}\right)  \right \}  $ converges to $\psi \left(  g^{\prime
}\right)  =g.$

If (\ref{empty_tonos}) holds only for finitely many $n,$ then we have that
$\left \{  \psi \left(  g_{n}^{\prime}\right)  \right \}  $ is a sequence of
closed piece-wise geodesics which converges, in the compact-open topology, to
$g.$
\end{proof}

We now turn our attention to Euclidean surfaces with conical singularities
$\left \{  s_{1},\ldots,s_{l},s_{l+1},\ldots,s_{n}\right \}  $which satisfy
$\theta \left(  s_{i}\right)  \in \left(  0,\pi \right)  $ for $i=1,\ldots,l$ and
$\theta \left(  s_{i}\right)  \in \left(  2\pi,\infty \right)  $ for
$i=l+1,\ldots,n,$ $l\leq n.$ Note that a geodesic, which, as usual, is defined
to be a local isometric map, may have homotopically trivial self
intersections, that is,

\begin{center}
$\exists t_{1},t_{2}\in \mathbb{R}$ with $g\left(  t_{1}\right)  =g\left(
t_{2}\right)  $ such that the loop $g|_{\left[  t_{1},t_{2}\right]  }$ is contractible.
\end{center}

\noindent Clearly, the lift $\widetilde{g}$ to the universal cover
$\widetilde{S}$ of $S$ of a geodesic $g$ with homotopically trivial self
intersections is not a global isometric map. In view of this and the following
Lemma, we restrict our attention to geodesics which do not have homotopically
trivial self intersections, equivalently, from now on the word geodesic will
mean that its lift to the universal cover $\widetilde{S}$ is a (global) geodesic.

\begin{lemma}
There exists a positive real number $C$ such that for any geodesic $g$
\[
d\left(  g\left(  t\right)  ,s_{i}\right)  \geq
C\mathrm{\  \  \mathrm{for\  \ all\ }}i=1,\ldots,l\mathrm{\ and\ for\ all\  \ }%
t\in \mathbb{R}.
\]

\end{lemma}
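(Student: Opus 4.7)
The plan is to argue locally in a small cone neighborhood of each small-angle singularity, exploiting the fact that, with the convention just introduced, a geodesic $g$ lifts to a \emph{global} isometric map $\widetilde{g}:\mathbb{R}\to\widetilde{S}$, hence to an injective map.

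First I would fix a single $\rho_0>0$ small enough that, for every $i=1,\dots,l$, the ball $D(s_i,\rho_0)\subset S$ is isometric to the Euclidean cone $C(s_i,\theta(s_i))$ of radius $\rho_0$ and contains no other singularity; correspondingly, for every pre-image $\widetilde{s_i}\in\widetilde{S}$ the ball $D(\widetilde{s_i},\rho_0)$ is isometric to the same cone. Since $\theta(s_i)<\pi<2\pi$, no local isometry can pass through $\widetilde{s_i}$ (the two subtended angles would each have to be $\geq\pi$), so $d(\operatorname{Im}\widetilde{g},\widetilde{s_i})>0$ is automatic; the content of the lemma is to make this bound uniform.

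The heart of the proof is the following unfolding calculation. Suppose $\widetilde{g}$ enters $D(\widetilde{s_i},\rho_0)$ and realizes minimum distance $d>0$ to $\widetilde{s_i}$. Unfold the cone, placing the chord of $\widetilde{g}$ as a Euclidean straight segment in a planar wedge of angle $\theta(s_i)$ (gluing additional copies of the wedge along bounding rays if the geodesic winds). Parameterizing so that $s=0$ is the foot of the perpendicular to $\widetilde{s_i}$, we have $r(s)=\sqrt{d^2+s^2}$ and $\varphi(s)=\arctan(s/d)$. The symmetric pair $s_\pm=\pm d\tan(\theta(s_i)/2)$ then satisfies
\[
r(s_\pm)=\frac{d}{\cos(\theta(s_i)/2)}\qquad\text{and}\qquad\varphi(s_+)-\varphi(s_-)=\theta(s_i),
\]
so $s_-$ and $s_+$ are identified under the gluing of the two bounding rays of the wedge and thus map to the same point of $D(\widetilde{s_i},\rho_0)$. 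If $d<\rho_0\cos(\theta(s_i)/2)$, both $s_\pm$ lie strictly inside the ball (and the chord between them does too, since $r(s)$ is even and monotone in $|s|$), producing a genuine self-intersection of $\widetilde{g}$ in $\widetilde{S}$ and contradicting injectivity of the global isometry.

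Hence $d(\operatorname{Im}\widetilde{g},\widetilde{s_i})\geq\rho_0\cos(\theta(s_i)/2)$ whenever $\widetilde{g}$ meets the ball, and $\geq\rho_0$ otherwise. Projecting to $S$ and minimizing over pre-images gives the same lower bound for $d(\operatorname{Im}g,s_i)$, so
\[
C:=\rho_0\,\min_{1\leq i\leq l}\cos(\theta(s_i)/2)>0
\]
does the job. The most delicate point, as I see it, will be verifying that the identification of $s_-$ and $s_+$ under the cone gluing yields a genuine coincidence $\widetilde{g}(t_-)=\widetilde{g}(t_+)$ in $\widetilde{S}$ rather than a mere chart artefact; this rests on the wedge-to-cone map being an isometry onto $D(\widetilde{s_i},\rho_0)$, and on checking that the entire chord between $s_\pm$ stays inside that ball so that the isometry applies throughout.
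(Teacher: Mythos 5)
Your proof is correct and takes essentially the same route as the paper's: both arguments show that if the minimal distance $d$ from the geodesic to $s_i$ falls below $(\text{radius})\cdot\cos\bigl(\theta(s_i)/2\bigr)$, then the geodesic self-intersects at the point at distance $d/\cos\bigl(\theta(s_i)/2\bigr)$ on the angle-bisecting ray, reached at parameters $\pm d\tan\bigl(\theta(s_i)/2\bigr)$ from the foot of the perpendicular. The only differences are presentational: the paper works downstairs in $S$ with two congruent right triangles and concludes a homotopically trivial self-intersection, while you unfold the cone in $\widetilde{S}$ and contradict injectivity of the lift --- equivalent statements under the standing convention --- and the paper takes the radius to be the distance to the nearest other singularity rather than your cone-neighborhood radius $\rho_0$.
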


\begin{proof}
It suffices to show that for each $i=1,\ldots,l$ there exists $C_{i}>0$
depending on $\theta \left(  s_{i}\right)  ,$ such that for any geodesic $g$
\[
d\left(  g\left(  t\right)  ,s_{i}\right)  \geq C_{i}%
\mathrm{\  \ for\  \ all\  \ }t\in \mathbb{R}.
\]
Choose $C_{i}^{\prime}$ such that $d\left(  s_{i},s_{j}\right)  >C_{i}%
^{\prime}$ for all $j\neq i.$ Set $C_{i}=C_{i}^{\prime}\cos \frac{\theta \left(
s_{i}\right)  }{2}.$ We will show that if $d\left(  \operatorname{Im}%
g,s_{i}\right)  <C_{i}$ then $g$ has a homotopically trivial self intersection.

Let $g\left(  t_{i}\right)  $ be the point on $\operatorname{Im}g$ of minimum
distance, say $C_{0},$ from $s_{i}.$ Clearly, $d\left(  g\left(  t_{i}\right)
,s_{i}\right)  \equiv C_{0}<C_{i}.$ Then, the geodesic segment $\left[
s_{i},g\left(  t_{i}\right)  \right]  $ is perpendicular to $\operatorname{Im}%
g.$ Let $r$ be the geodesic ray emanating from $s_{i}$ such that both angles
subtended by $r$ and $\left[  s_{i},g\left(  t_{i}\right)  \right]  $ at
$s_{i}$ are equal to $\frac{\theta \left(  s_{i}\right)  }{2}.$ Set
\[
T=\frac{C_{0}}{\cos \frac{\theta \left(  s_{i}\right)  }{2}}\mathrm{\ and\ }%
t_{i}^{\prime}=C_{0}\tan \frac{\theta \left(  s_{i}\right)  }{2}.
\]
Then the geodesic segments
\[
\left[  s_{i},g\left(  t_{i}\right)  \right]  ,\left[  g\left(  t_{i}\right)
,g\left(  t_{i}+t_{i}^{\prime}\right)  \right]  \mathrm{\ and\ }r|_{\left[
0,T\right]  }%
\]
and the geodesic segments
\[
\left[  s_{i},g\left(  t_{i}\right)  \right]  ,\left[  g\left(  t_{i}\right)
,g\left(  t_{i}-t_{i}^{\prime}\right)  \right]  \mathrm{\ and\ }r|_{\left[
0,T\right]  }%
\]
form two (equal) right triangles with common hypotenuse $r|_{\left[
0,T\right]  }$ and common side $\left[  s_{i},g\left(  t_{i}\right)  \right]
.$ Thus,
\[
g\left(  t_{i}+t_{i}^{\prime}\right)  =r\left(  T\right)  =g\left(
t_{i}-t_{i}^{\prime}\right)
\]
and $g$ has a self intersection which is clearly homotopically trivial.
\end{proof}

In the next Theorem $GS$ consists, as mentioned above, of geodesics with no
homotopically trivial self intersections.

\begin{theorem}
\label{dense_very_small}Let $S$ be a Euclidean surface with conical
singularities
\[
\left \{  s_{1},\ldots,s_{l},s_{l+1},\ldots,s_{n}\right \}
\]
with $\theta \left(  s_{i}\right)  \in \left(  0,\pi \right)  $ for
$i=1,\ldots,l$ and $\theta \left(  s_{i}\right)  \in \left(  2\pi,\infty \right)
$ for $i=l+1,\ldots,n,$ $l\leq n.$ Then, every element in $GS$ can be
approximated, in the compact open topology, either by a sequence of closed
geodesics or, by a sequence of generalized saddles.
\end{theorem}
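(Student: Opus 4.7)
The plan is to follow the strategy of Theorem~\ref{dense_small}: use the branched covering $\psi:S'\rightarrow S$ provided by Theorem~\ref{branched covering} to transfer the problem to $S'$ (where every conical angle exceeds $2\pi$), apply Theorem~\ref{dense} there, and then push the approximating sequence back to $S$ via $\psi$. The new input coming from the hypothesis $\theta(s_i)<\pi$ enters only through the preceding Lemma, which produces a uniform constant $C>0$ with $d(\operatorname{Im}g,s_i)\geq C$ for every $g\in GS$ and every $i=1,\ldots,l$. This uniform separation is precisely what will upgrade the closed piece-wise geodesics of Theorem~\ref{dense_small}(b) into bona fide generalized saddle connections.

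First I would fix $g\in GS$ and lift it to $g'\in GS'$; such a lift exists because by the preceding Lemma $\operatorname{Im}g$ misses the branch set $\{s_1,\ldots,s_l\}$, over whose complement $\psi$ restricts to a genuine finite covering. Theorem~\ref{dense} then furnishes closed geodesics $g_n'\in GS'$ with $g_n'\rightarrow g'$ in the compact-open topology. For each $n$, the projection $\psi\circ g_n'$ parametrizes a closed curve in $S$; it is a local geodesic on any subarc of $g_n'$ that avoids $\{s_1',\ldots,s_l'\}$ (since $\psi$ is a local isometry off the branch set), and it acquires a genuine corner at every crossing of an $s_j'$, because $\theta(s_j)<\pi<2\pi$ excludes any straight-through passage at $s_j$.

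Now I split into two cases. If infinitely many $g_n'$ avoid $\{s_1',\ldots,s_l'\}$, a subsequence of $\{\psi(g_n')\}$ consists of genuine closed geodesics in $S$ converging to $g$, and the proof is complete. Otherwise, for all large $n$ the curve $\psi(g_n')$ has at least one corner, and I would invoke the uniform constant $C$: choose $T_n\rightarrow\infty$ and $\varepsilon_n\rightarrow 0$ with $\varepsilon_n<C/2$, and arrange lifts so that $d(\widetilde{g_n'}(t),\widetilde{g'}(t))<\varepsilon_n$ for all $t\in[-T_n,T_n]$. Since $\psi$ is $1$-Lipschitz, $\psi(g_n')$ stays within $\varepsilon_n<C/2$ of $g$ on $[-T_n,T_n]$, hence at distance $\geq C/2$ from $\{s_1,\ldots,s_l\}$, and therefore has no corners on that interval. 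Because $g_n'$ is periodic, the set of parameters at which $\psi(g_n')$ corners is discrete and periodic, so there exist consecutive corner times $a_n\leq -T_n<T_n\leq b_n$, and $\sigma_n:=\psi(g_n')|_{[a_n,b_n]}$ is a geodesic segment with both endpoints in $\{s_1,\ldots,s_l\}$, i.e., a generalized saddle connection. For any compact $K\subset\mathbb{R}$ one has $K\subset[a_n,b_n]$ for all large $n$, and $\sigma_n|_K$ converges uniformly to $g|_K$, yielding the required convergence.

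The main point of difficulty is ensuring that the corners of $\psi(g_n')$ straddle the interval $[-T_n,T_n]$ on both sides rather than bracket it on one side only; the uniform separation $C$ from the preceding Lemma is exactly what forces this, and no further geometric input is needed. A minor bookkeeping matter is the interpretation of compact-open convergence for a sequence of finite-length saddle connections toward an infinite geodesic, which is dispatched by the observation that every compact $K\subset\mathbb{R}$ is eventually contained in the domain of $\sigma_n$.
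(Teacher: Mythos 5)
Your proposal is correct and follows essentially the same route as the paper: lift via the branched covering of Theorem \ref{branched covering}, apply Theorem \ref{dense} upstairs, split into the two cases according to whether infinitely many of the approximating closed geodesics avoid the branch-point preimages, and in the second case use the uniform constant $C$ from the preceding Lemma to push the singular crossings outside an ever-growing parameter interval, extracting a generalized saddle connection between consecutive crossings. Your explicit appeal to periodicity to guarantee corner times on both sides of $[-T_n,T_n]$ is a small point the paper leaves implicit, but the argument is the same.
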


\begin{proof}
As in the previous theorem, consider the branched covering $\psi:S^{\prime
}\rightarrow S$ with branched set $\left \{  s_{1},\ldots,s_{l}\right \}  $ with
pre-images $\left \{  s_{1}^{\prime},\ldots,s_{l}^{\prime}\right \}  $ in
$S^{\prime}.$ Let $\beta \in GS$ be a non-closed geodesic. By the previous
lemma,
\[
\exists C>0\mathrm{\  \ such\  \ that\  \ }d\left(  \beta \left(  t\right)
,s_{i}\right)  \geq C,\mathrm{\ for\  \ all\  \ }i=1,\ldots
,l,\mathrm{\  \ and\ for\  \ all\  \ }t\in \mathbb{R}.
\]
An analogous statement follows for any lift $\beta^{\prime}$ of $\beta$ and
the singularities $\left \{  s_{1}^{\prime},\ldots,s_{l}^{\prime}\right \}  $
\begin{equation}
\exists C>0\mathrm{\  \ such\  \ that\  \ }d\left(  \beta^{\prime}\left(
t\right)  ,s_{i}^{\prime}\right)  \geq C,\mathrm{\  \ for\  \ all\  \ }%
i=1,\ldots,l,\mathrm{\  \ and\  \ }t\in \mathbb{R}. \label{si}%
\end{equation}
Theorem \ref{dense} asserts that there exists a sequence of closed geodesics
$\left \{  \beta_{n}^{\prime}\right \}  $ converging, in the compact open
topology, to $\beta^{\prime}.$ If
\begin{equation}
\operatorname{Im}\beta_{n}^{\prime}\cap \left \{  s_{1}^{\prime},\ldots
,s_{l}^{\prime}\right \}  =\varnothing \label{empty_tonos_duo}%
\end{equation}
holds for infinitely many $n,$ then we obtain as in the proof of the previous
theorem that the sequence of closed geodesics $\left \{  \psi \left(  \beta
_{n}^{\prime}\right)  \right \}  $ converges to $\psi \left(  \beta^{\prime
}\right)  =\beta.$

If (\ref{empty_tonos_duo}) holds only for finitely many $n,$ we may assume
that for all $n,$ $\operatorname{Im}\beta_{n}^{\prime}$ contains at least one
singularity from $\left \{  s_{1}^{\prime},\ldots,s_{l}^{\prime}\right \}  .$
Choose a sequence of positive $\varepsilon_{k}\rightarrow0$ with
$\varepsilon_{k}<C.$ Then, for each $k\in \mathbb{N},$ there exists a closed
geodesic $\beta_{n\left(  k\right)  }^{\prime}$ which $\left(  \varepsilon
_{k},\left[  -k,k\right]  \right)  $-approximates $\beta^{\prime},$ that is

for $t\in \left[  -k,k\right]  ,$ $d\left(  \beta_{n\left(  k\right)  }%
^{\prime}\left(  t\right)  ,\beta^{\prime}\left(  t\right)  \right)
<\varepsilon_{k}$

By (\ref{si}), $\beta_{n\left(  k\right)  }^{\prime}|_{\left[  -k,k\right]  }$
does not contain a singularity $s_{i},i\leq l.$ Restrict $\beta_{n\left(
k\right)  }^{\prime}$ to a compact set $\left[  t_{-k},t_{k}\right]  $
containing $\left[  -k,k\right]  $ such that
\[
\beta_{n\left(  k\right)  }^{\prime}\left(  t_{-k}\right)  ,\beta_{n\left(
k\right)  }^{\prime}\left(  t_{k}\right)  \in \left \{  s_{1}^{\prime}%
,\ldots,s_{l}^{\prime}\right \}
\]
and $\beta_{n\left(  k\right)  }^{\prime}|_{\left(  t_{-k},t_{k}\right)  }$
does not contain a singularity $s_{i},i\leq l.$ Clearly, $\gamma_{n}^{\prime
}\equiv \beta_{n\left(  k\right)  }^{\prime}|_{\left[  t_{-k},t_{k}\right]  }$
is a sequence of generalized saddles (each with endpoints $\beta_{n\left(
k\right)  }^{\prime}\left(  t_{-k}\right)  $, $\beta_{n\left(  k\right)
}^{\prime}\left(  t_{k}\right)  )$ approximating $\beta^{\prime}.$ It follows
that $\gamma_{n}=\psi \left(  \gamma_{n}^{\prime}\right)  $ is a sequence of
generalized saddles approximating $\beta.$
\end{proof}

\end{document}